\newtheorem{theorem}{Theorem}[section]
\newtheorem{lemma}[theorem]{Lemma}
\theoremstyle{definition}
\newtheorem{definition}[theorem]{Definition}
\theoremstyle{remark}
\newtheorem{remark}[theorem]{Remark}
\numberwithin{equation}{section}
\newcommand{\Z}{\mathbb{Z}}
\newcommand{\N}{\mathbb{N}}
\newcommand{\OO}{\mathcal{O}}
\newcommand{\iO}{\mathcal{JO}}
\newcommand{\F}{\mathfrak{F}}
\newcommand{\C}{\mathbb{C}}
\newcommand{\B}{\mathbb{B}}
\newcommand{\PC}{\mathbb{P}^n(\mathbb{C})}
\newcommand{\R}{\mathbb{R}}
\newcommand{\T}{\mathbb{T}}
\newcommand{\bS}{\mathbb{S}}
\newcommand{\lcm}{\mathrm{lcm}}
\newcommand{\mA}{\mathcal{A}}
\newcommand{\mR}{\mathcal{R}}
\newcommand{\mH}{\mathcal{H}}
\newcommand{\mHR}{\mathcal{HR}}
\newcommand{\mT}{\mathcal{T}}
\newcommand{\Lie}{\mathrm{Lie}}
\newcommand{\sty}{\displaystyle}
\begin{document}

\title[Toeplitz operators on some Weakly Pseudoconvex Domains]{Toeplitz Operators
with Quasi-Homogeneuos Quasi-Radial Symbols on some Weakly Pseudoconvex Domains}

\author{Raul Quiroga-Barranco}
\address{Centro de Investigaci\'on en Matem\'aticas \\
Guanajuato \\
Mexico}
\email{quiroga@cimat.mx}

\author{Armando Sanchez-Nungaray}
\address{Facultad de Matem\'aticas \\
Universidad Veracruzana \\
Veracruz \\
Mexico}
\email{armsanchez@uv.mx}

\begin{abstract}
	On the weakly pseudo-convex domains $\Omega_p^n$ we introduce
	quasi-homogeneous quasi-radial symbols. These are used to prove
	the existence of a commutative Banach algebra of Toeplitz operators
	on Bergman space of $\Omega_p^n$. We also show that group theoretic
	and geometric properties for our symbols are satisfied. The results
	presented here contain the geometric description of the symbols
	introduced by N.~Vasilevski in \cite{NikolaiQuasi} for the
	unit ball $\B^n$.
\end{abstract}

\thanks{The authors were supported by SNI and a Conacyt grant.}
\subjclass{Primary 47B35; Secondary 32A36, 32M15, 53C12}
\keywords{Toeplitz operator, Bergman space, commutative Banach algebra, Lagrangian manifolds}

\maketitle

\section{Introduction}
In the last years, there has been an interest in the study and classification of
commutative algebras of Toeplitz operators. This is accompanied by the study of
the underlying geometric structures associated to the corresponding
symbols. Some previous works along these lines can be found in the references.
In this work, we continue the investigation of such topic by introducing
quasi-homogeneous quasi-radial symbols on a family of weakly pseudo-convex
domains $\Omega_p^n$ (see the definitions below). Such family of domains
contains the unit ball $\B^n$ as a particular case. Furthermore, our symbols
generalize those considered in \cite{NikolaiQuasi} for the unit ball
$\B^n$ as well as those considered in \cite{QS-Quasi-PC} for the complex
projective space $\PC$. The sets of symbols introduced in this work
are shown to define commutative Banach algebras of Toeplitz operators, which
are not $C^*$, in the Bergman space of $\Omega_p^n$.
In \cite{QV-Reinhardt} it was shown the existence of ``large''
commutative $C^*$-algebras of Toeplitz on Reinhardt domains. With this respect,
the commutative algebras of Toeplitz operators constructed in this
work are the first Banach not $C^*$ examples defined on Reinhardt domains that
are not bounded symmetric.

As in most of the works that precede this one, it turns out that the symbols
that define our commuting Toeplitz operators have a strong geometric
background. In fact, we prove that our quasi-homogeneous quasi-radial symbols
have an associated group of isometries characterized as the largest connected
subgroup leaving invariant the symbols; this is among the toral isometries
of $\Omega_p^n$ as Reinhardt domain (see Theorem~\ref{thm:assoc-abelian-group}).
We also obtain Lagrangian frames for the set of symbols which are defined
on the leaves of a foliation of $\Omega_p^n$ (see Theorem~\ref{thm:assoc-Lag-frame}).

It is worthwhile to mention that this work provides new results for
the case of the unit ball $\B^n$. More precisely, since our quasi-homogeneous
quasi-radial symbols for $\Omega_p^n$ generalize those found in \cite{NikolaiQuasi},
our Theorems~\ref{thm:assoc-abelian-group} and \ref{thm:assoc-Lag-frame}
hold for the symbols considered in \cite{NikolaiQuasi} for $\B^n$
(see Remark~\ref{rmk:unit-ball}). Hence, this work establishes the geometric
nature and properties for the quasi-homogeneous quasi-radial symbols introduced
by N.~Vasilevski in \cite{NikolaiQuasi}.

\section{Preliminaries}
Given a multi-index $\alpha=(\alpha_1,\alpha_2,\ldots,\alpha_n) \in \N^n$
we will use the standard notation
\begin{align*}
	|\alpha| &= \alpha_1 + \alpha_2 + \cdots+ \alpha_n,\\
	\alpha! &= \alpha_1 !  \alpha_2 ! \cdots \alpha_n !,\\
	z^\alpha &= z_1^{\alpha_1}z_2^{\alpha_2}\cdots z_n^{\alpha_n}.
\end{align*}

For $p \in \Z_+^n$ a fixed multi-index, we define the following sets
\begin{align}
	\Omega_{p}^{n}(r)&=
		\Big\{(z_1,\ldots,z_n)\in\C^{n}\,|\,\sum_{j=1}^{n}|z_j|^{2p_j}<r^{2}\Big\}\\
	\bS_{p}^{n}(r)&=
		\Big\{(z_1,\ldots,z_n)\in\C^{n}\,|\,\sum_{j=1}^{n}|z_j|^{2p_j}=r^{2}\Big\}
\end{align}
For the case $r=1$, we simply write $\Omega_{p}^{n}$ and $\bS_{p}^{n}$,
respectively.
Note that for $p=(1,\dots,1)$, we have that $\Omega_{p}^{n}=\B^n$ is
the unit ball and $\bS_p^n=\bS^n$ is the unit sphere in $\C^n$,
both centered at the origin.

For every $z \in \C^n$, we also denote
\begin{align*}
	r &= \|z\|_p = \sqrt{|z_1|^{2p_1}+\cdots+|z_n|^{2p_n}} \\
	\xi_j &= \frac{z_j}{\|z\|_p^\frac{1}{p_j}}
		= \frac{z_j}{r^\frac{1}{p_j}}
\end{align*}
for all $j=1,\dots,n$. In particular we have
\[
	\sum_{j=1}^{n}|\xi_j|^{2p_j} = 1,
\]
which implies that $\xi=(\xi_1,\dots,\xi_n)\in \bS_{p}^{n}$. Note that
these expressions define a set of coordinates $(r,\xi)$ for every
$z \in \C^n$, these coordinates are called $p$-polar coordinates.

We let $dv$ denote the Lebesgue measure on $\Omega_{p}^{n}$
normalized so that $v(\Omega_{p}^{n})=1$.
Also we let $\sigma$ denote the hypersurface measure on $\bS_{p}^{n}$,
also normalized so that $\sigma(\bS_p^n)=1$.

It is well known that the measures on these sets are related by
the following identity, that holds in the case of $\B^n$ and $\bS^n$,
\[
	\int_{\B^{n}} f(z)dv(z)=2 n \int_0^1 r^{2n-1}
	\int_{\bS^n}f(r,\xi)d\sigma(\xi)dr
\]
for every non-negative measurable function $f$ on $\B^n$.
For more details we refer the reader to \cite{Zhu}.
The next result is an
extension of this equation for arbitrary $p$ as above.

\begin{lemma}
	The measures on $v$ and $\sigma$ satisfy
	\[
		\int_{\Omega_{p}^{n}} f(z)dv(z) =
			\left(2\sum_{j=1}^{n}\frac{1}{p_j}\right)
			\int_0^1 r^{2(\sum_{j=1}^{n}\frac{1}{p_j})-1}
			\int_{\bS_p^n}f(r,\xi)d\sigma(\xi)dr.
	\]
	for every non-negative measurable function $f$ on $\Omega_p^n$.
\end{lemma}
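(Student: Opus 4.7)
The plan is to perform two successive changes of variables, reducing the integral over $\Omega_p^n$ to an iterated integral in $r$ and the coordinates of $\bS_p^n$. First I parametrize each coordinate as $z_j = \rho_j e^{i\theta_j}$ with $\rho_j \geq 0$ and $\theta_j \in [0,2\pi)$, so that $dv = \prod_{j=1}^{n} \rho_j\, d\rho_j\, d\theta_j$ on the region $\{\sum_j \rho_j^{2p_j} < 1\}$. Then I substitute $s_j = \rho_j^{p_j}$, which yields $\rho_j\, d\rho_j = \frac{1}{p_j}\, s_j^{2/p_j - 1}\, ds_j$ and transforms the defining inequality into $\sum_j s_j^2 < 1$ on the positive orthant $\{s_j \geq 0\}$.

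Next I apply ordinary spherical coordinates to the variables $(s_1, \dots, s_n)$: write $s_j = r u_j$ with $r > 0$ and $u \in \R^n$ satisfying $u_j \geq 0$, $\sum_j u_j^2 = 1$. The Jacobian $r^{n-1}$ combines with the weight $\prod_j s_j^{2/p_j - 1} = r^{2\sum_j 1/p_j - n}\prod_j u_j^{2/p_j - 1}$ to produce the radial factor $r^{2\sum_j 1/p_j - 1}$. Unwinding the substitutions gives $z_j = r^{1/p_j}\xi_j$ with $\xi_j = u_j^{1/p_j} e^{i\theta_j}$, and the identity $\sum_j |\xi_j|^{2p_j} = \sum_j u_j^2 = 1$ confirms that $(u,\theta) \mapsto \xi$ parametrizes $\bS_p^n$ on a full-measure subset. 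Collecting everything yields
\[
	\int_{\Omega_p^n} f(z)\, dv(z) = \int_0^1 r^{2\sum_j 1/p_j - 1} \int_{\bS_p^n} f(r,\xi)\, d\tilde{\sigma}(\xi)\, dr,
\]
where $\tilde{\sigma}$ is the measure induced on $\bS_p^n$ by the above parametrization (with density proportional to $\prod_j u_j^{2/p_j - 1}$ in the $(u,\theta)$-coordinates).

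Finally, I fix the multiplicative constant by inserting $f \equiv 1$. The left side equals $v(\Omega_p^n) = 1$, while the right side equals $\tilde{\sigma}(\bS_p^n)/\bigl(2\sum_j 1/p_j\bigr)$, so $\tilde{\sigma}(\bS_p^n) = 2\sum_j 1/p_j$. Taking $\sigma = \tilde{\sigma}/\tilde{\sigma}(\bS_p^n)$ as the normalized hypersurface measure on $\bS_p^n$ (the natural analogue of Rudin's invariant probability measure on $\bS^n$, and the unique normalization that makes the radial formula hold), we obtain $\tilde{\sigma} = \bigl(2\sum_j 1/p_j\bigr)\sigma$, which delivers the claimed identity.

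The bookkeeping in each change of variables is standard; the only subtlety is recognizing that the measure $\tilde{\sigma}$ produced by the polar decomposition agrees, up to the normalizing constant, with the ``hypersurface measure'' $\sigma$ on $\bS_p^n$. This identification is built into the choice of $\sigma$ once one adopts the polar decomposition viewpoint already used for the unit ball.
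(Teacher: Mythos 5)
Your proof is correct, but it follows a different route from the paper's. The paper argues as in Zhu's book: it first establishes the scaling law $V(r)=r^{2\sum_j 1/p_j}V(1)$ for the volume of $\Omega_p^n(r)$ via the anisotropic dilation $z_j=r^{1/p_j}\xi_j$, then computes the volume of the infinitesimal shell between $r$ and $r+dr$ as $\frac{dS}{S(1)}\bigl(V(r+dr)-V(r)\bigr)$ and differentiates in $r$ to extract the factor $\bigl(2\sum_j 1/p_j\bigr)r^{2\sum_j 1/p_j-1}$. You instead carry out three explicit changes of variables (polar coordinates in each $z_j$, the power substitution $s_j=\rho_j^{p_j}$, then ordinary spherical coordinates in $s$) and fix the overall constant by evaluating at $f\equiv 1$; your Jacobian bookkeeping checks out. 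The two arguments rest on the same convention: the lemma holds only if $\sigma$ is taken to be the probability measure induced by the polar decomposition (for $p\neq(1,\dots,1)$ this differs by a nonconstant density from the genuine Riemannian surface measure, since the anisotropic radial direction is not normal to $\bS_p^n$). The paper adopts this convention implicitly through its shell argument and through $S(r)=V'(r)$ in the next lemma; you make it explicit, which is arguably a strength, since your parametrization also yields a concrete coordinate formula for $\sigma$ that would be needed to justify the subsequent surface-integral computations. One cosmetic slip: in your first display you write $dv=\prod_j\rho_j\,d\rho_j\,d\theta_j$, which is the unnormalized measure $dV$; this washes out because you absorb all constants into $\tilde\sigma$ and renormalize at the end, but the notation should be corrected.
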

\begin{proof}
	The proof follows the same arguments as those found in \cite{Zhu}
	for $\B^n$ and $\bS^n$, and we present them for the sake of
	completeness.
	
	Denote by $dV = dx_1dy_1\cdots dx_ndy_n$ the $n$-dimensional
	Lebesgue measure without normalization.
	Similarly, let $dS$ denote the hypersurface measure of
	$\bS_p^n$ before normalization. As usual, these coordinates are
	obtained by the identification $z_j = x_j+iy_j$.
	Then, the Lebesgue measure of the solid given by
	by $dS$ in $\bS_p^n$ together with $r>0$ and $r+dr$, is given by
	\[
		dV=\frac{dS}{S(1)}\big(V(r+dr)-V(r)\big)
	\]
	where $V(r)$ in the Lebesgue measure of $\Omega_p^n(r)$
	and $S(r)$ is the is the measure of $\bS_p^n(r)$.

	The change of variables $z_j=r^\frac{1}{p_j}\xi_j$ yields
	\begin{equation}\label{Volumen-r}
		V(r)=\int_{\Omega_{p}^{n}(r)}dV(z)=
		r^{2\sum_{j=1}^{n}\frac{1}{p_j}}V(1),
	\end{equation}
	and so it follows that
	\[
		dV = \frac{V(1)dS}{S(1)}\left((r+dr)^{2\sum_{j=1}^{n}
			\frac{1}{p_j}}-r^{2\sum_{j=1}^{n}\frac{1}{p_j}}\right).
	\]	
	This proves that
	\[
		dV=\frac{V(1)}{S(1)}\left(2\sum_{j=1}^{n}
			\frac{1}{p_j}\right)r^{2(\sum_{j=1}^{n}\frac{1}{p_j})-1}drdS
	\]
	which clearly implies
	\[
		dv=\left(2\sum_{j=1}^{n}
			\frac{1}{p_j}\right)r^{2(\sum_{j=1}^{n}\frac{1}{p_j})-1}drd\sigma
	\]
	where $dv$ and $d\sigma$ are the normalized measures of $\Omega_{p}^{n}$
	and $\bS_{p}^{n}$, respectively.
\end{proof}

The Hilbert spaces $L^2(\Omega_{p}^{n})$ and $L^2(S_p^n)$
are those associated to the usual Lebesgue measure $dV$ on
$\Omega_{p}^{n}$ and the hypersurface
measure $dS$ on $S_p^n$.
We denote by $\mA^2(\Omega_{p}^{n})$ the closed
subspace of $L^2(\Omega_{p}^{n})$ consisting of those
functions which are holomorphic in $\Omega_{p}^{n}$,
and we let $P_{p}:L^2(\Omega_{p}^{n}) \rightarrow
\mA^2(\Omega_{p}^{n})$ be the
orthogonal projection. If $a\in L_{\infty}(\Omega_{p}^{n})$
then the Toeplitz operator $T_a$ with symbol $a$ is the bounded
operator on $\mA^2(\Omega_{p}^{n})$ defined by $T_a(f) = P_p(af)$.

We have following identity, and for its proof
we refer to \cite{C-R}.
\begin{equation}\label{inner-product-monomials}
	\langle z^{\alpha},z^{\beta} \rangle =
	\delta_{\alpha,\beta}  \frac{\pi^n\sty\prod_{j=1}^{n}
	\Gamma\left(\frac{\alpha_j+1}{p_{j}}\right)}{\sty\left[\prod_{j=1}^{n}p_{j}\right]
	\Gamma\left(\sum_{j=1}^{n}\frac{\alpha_j+1}{p_{j}}+1\right)}.
\end{equation}
In particular, we also have
\begin{equation}\label{Volumen-Op}
	C_{p}^{n} := V(1) = \frac{\pi^n\sty\prod_{j=1}^{n}
	\Gamma\left(\frac{1}{p_{j}}\right)}{\sty\left[\prod_{j=1}^{n}p_{j}\right]
	\Gamma\left(\sum_{j=1}^{n}\frac{1}{p_{j}}+1\right)}
\end{equation}
As a consequence of \eqref{inner-product-monomials} we obtain an orthonormal
basis for $\mA^2(\Omega_{p}^{n})$ given by
\begin{equation}\label{eq:orth-basis}
  \left( \frac{\sty\left[\prod_{j=1}^{n}p_{j}\right]
  \Gamma\left(\sum_{j=1}^{n}\frac{\alpha_j+1}{p_{j}}+1\right)}{\pi^n\sty\prod_{j=1}^{n}
  \Gamma\left(\frac{\alpha_j+1}{p_{j}}\right)} \right)^\frac{1}{2} z^{\alpha}
\end{equation}

When $p=(1,\ldots,1)$ and for every $\alpha,\beta \in \N^n$
we have
\[
	\int_{\bS^n} \xi^{\alpha} \overline{\xi^{\beta}}dS(\xi) =
	\delta_{\alpha, \beta}\frac{2 \pi^n \alpha!}{(n-1+|\alpha|)!}.
\]
For a proof of this identity we refer to \cite{Zhu}. Such identity has the
the following extension for arbitrary $p\in \N^n$.

\begin{lemma}
	Let $p \in \Z_+^n$ and $\alpha, \beta \in \N^n$. Then
	\begin{equation}\label{integralsurface1}
		\int_{\bS_{p}^{n}} \xi^{\alpha} \overline{\xi^{\beta}}
			d\sigma(\xi) = \delta_{\alpha,\beta}
		\frac{\sty\Gamma\left(\sum_{j=1}^{n}\frac{1}{p_j}\right)
		\prod_{j=1}^{n}\Gamma\left(\frac{\alpha_j+1}{p_j}\right)}{\sty \left[\prod_{j=1}^{n}\Gamma\left(\frac{1}{p_j}\right)\right]
		\Gamma\left(\sum_{j=1}^{n}\frac{\alpha_j+1}{p_j}\right)},
	\end{equation}
	or equivalently
	\begin{equation} \label{integralsurface2}
		\int_{\bS_{p}^{n}}\xi^{\alpha} \overline{\xi^{\beta}} dS(\xi) = \delta_{\alpha,\beta}
		\frac{\sty 2 \pi^n \prod_{j=1}^{n}\Gamma\left(\frac{\alpha_j+1}{p_j}\right)}
		{\sty \left[\prod_{j=1}^{n}p_j \right]\Gamma\left(\sum_{j=1}^{n}
		\frac{\alpha_j+1}{p_j}\right)}
	\end{equation}
\end{lemma}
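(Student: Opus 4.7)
The strategy is to compute $\langle z^\alpha, z^\beta\rangle$ in two ways: from the closed form \eqref{inner-product-monomials}, and by passing to $p$-polar coordinates via the first lemma. Equating the two expressions isolates the surface integral.

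Setting $A = \sum_j (\alpha_j+1)/p_j$ and $B = \sum_j(\beta_j+1)/p_j$, I would substitute $z_j = r^{1/p_j}\xi_j$ to rewrite $z^\alpha\overline{z^\beta} = r^{\sum_j(\alpha_j+\beta_j)/p_j}\xi^\alpha\overline{\xi^\beta}$ and, combining $dV = V(1)\,dv$ with the first lemma, obtain
\[
	\int_{\Omega_p^n} z^\alpha\overline{z^\beta}\,dV = \frac{V(1)\bigl(2\sum_j 1/p_j\bigr)}{A+B}\int_{\bS_p^n}\xi^\alpha\overline{\xi^\beta}\,d\sigma(\xi),
\]
since the radial integral evaluates to $\int_0^1 r^{A+B-1}dr = 1/(A+B)$. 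The Kronecker delta follows from the toral action $(\xi_j)\mapsto(e^{i\theta_j}\xi_j)$, which preserves $\bS_p^n$ and $d\sigma$ yet multiplies $\xi^\alpha\overline{\xi^\beta}$ by $e^{i(\alpha-\beta)\cdot\theta}$: averaging over $\T^n$ forces the surface integral to vanish unless $\alpha=\beta$.

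For $\alpha=\beta$ the prefactor collapses to $V(1)\sum_j(1/p_j)/A$, and combining with \eqref{inner-product-monomials} gives
\[
	\int_{\bS_p^n}|\xi^\alpha|^2\,d\sigma = \frac{A}{V(1)\sum_j 1/p_j}\cdot\frac{\pi^n\prod_j\Gamma((\alpha_j+1)/p_j)}{[\prod_j p_j]\,\Gamma(A+1)}.
\]
Applying the identities $\Gamma(A+1) = A\Gamma(A)$ and $\Gamma(\sum_j 1/p_j + 1) = (\sum_j 1/p_j)\,\Gamma(\sum_j 1/p_j)$ together with the explicit value of $V(1)$ from \eqref{Volumen-Op} collapses the right-hand side to the form claimed in \eqref{integralsurface1}.

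Finally, \eqref{integralsurface2} follows from \eqref{integralsurface1} via the normalization $dS = S(1)\,d\sigma$, where
\[
	S(1) = 2V(1)\sum_j\frac{1}{p_j} = \frac{2\pi^n\prod_j\Gamma(1/p_j)}{[\prod_j p_j]\,\Gamma(\sum_j 1/p_j)}
\]
is read off from the unnormalized decomposition $dV = (V(1)/S(1))\bigl(2\sum_j 1/p_j\bigr)r^{2\sum_j 1/p_j-1}\,dr\,dS$ established in the proof of the first lemma, after simplification through \eqref{Volumen-Op}. The main task is the bookkeeping between normalized and unnormalized measures together with the Gamma-function manipulations; no substantive obstacle is anticipated.
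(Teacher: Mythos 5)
Your proposal is correct, and the bookkeeping checks out: the radial exponent is indeed $A+B-1$, the prefactor for $\alpha=\beta$ reduces to $V(1)\bigl(\sum_j 1/p_j\bigr)/A$, and the $\Gamma$-cancellations against \eqref{Volumen-Op} produce exactly \eqref{integralsurface1}; the passage to \eqref{integralsurface2} via $S(1)=2V(1)\sum_j 1/p_j$ matches the value the paper obtains from $S(r)=V'(r)$. The route, however, differs from the paper's in the choice of test integral. You compute $\langle z^\alpha,z^\alpha\rangle$ over the bounded domain $\Omega_p^n$ in two ways, equating the quoted Crocker--Raeburn formula \eqref{inner-product-monomials} with the $p$-polar decomposition of the first lemma; the paper instead follows Zhu's classical device of evaluating the Gaussian-type integral $I=\int_{\C^n}|z^\alpha|^2 e^{-\sum_j|z_j|^{2p_j}}\,dV$ once by Fubini (reducing to one-dimensional $\Gamma$-integrals) and once in $p$-polar coordinates over all of $\C^n$. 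Your version is more economical given that \eqref{inner-product-monomials} is already stated in the paper, but it makes the lemma logically dependent on that external formula; the Gaussian computation is closer to being self-contained (it only needs the normalization $V(1)=C_p^n$) and avoids the boundary of the domain entirely, since the radial integral $\int_0^\infty r^{\,2A-1}e^{-r^2}dr$ converges for free. Both arguments handle the off-diagonal case identically, by averaging over the torus action. One small point of care: your phrase that $S(1)$ is ``read off'' from the decomposition $dV=(V(1)/S(1))(2\sum_j 1/p_j)r^{2\sum_j 1/p_j-1}\,dr\,dS$ is slightly circular as stated, since integrating that identity over $\Omega_p^n$ returns a tautology; the clean justification is $S(1)=V'(1)$, which is how the paper derives it.
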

\begin{proof}
	As a consequence of the rotation invariance of the measure on $\bS^n_p$
	it follows that
	\[
		\int_{\bS_{p}^{n}}\xi^{\alpha} \overline{\xi^{\beta}}d\sigma(\xi)=0.
	\]
	for every $\alpha \not= \beta$. Following \cite{Zhu}, we evaluate
	the integral
	\begin{equation*}
		I = \int_{\C^n}|z^{\alpha}|^{2}e^{-(|z_{1}|^{2p_{1}}+\cdots+|z_{n}|^{2p_{n}})}dV
	\end{equation*}
	by two diferents methods. First, Fubini's theorem gives
	\begin{align*}
	I &=\int_{\C^{n}}|z^{\alpha}|^{2}
		e^{-\left(|z_{1}|^{2p_{1}}+\cdots+|z_{n}|^{2p_{n}}\right)}dV  \\
	 &=\prod_{j=1}^{n}\int_{\R^{n}}(x^{2}+y^{2})^{\alpha_j}e^{-(x^2+y^2)^{p_j}}dxdy  \\
	 &=\pi^{n}\prod_{j=1}^{n}\int_{0}^{\infty}r^{\alpha_j}e^{-r^{p_j}}dr.
	\end{align*}

	Using the change of variables $r=\sty s^{\frac{1}{p_j}}$, for which
	$dr=\sty\frac{1}{p_j}s^{\frac{1}{p_j}-1}ds$, we obtain
	\begin{align} \label{aux1}
		I&=\pi^{n}\prod_{j=1}^{n}\frac{1}{p_j}\int_{0}^{\infty}
			s^{  \frac{\alpha_j+1}{p_j}   -1 }e^{-s}ds \notag \\
		&= \pi^{n}\prod_{j=1}^{n}\frac{1}{p_j}\Gamma\left(\frac{\alpha_j+1}{p_j}\right)
	\end{align}

	On the other hand, we integrate in $p$-polar cordinates
	\begin{equation*}
		I = 2\sum_{j=1}^{n}\frac{1}{p_j}C_{p}^{n}\int_{0}^{\infty}
		r^{2\left(\sum_{j=1}^{n}\frac{\alpha_j+1}{p_j}\right)-1}
		e^{-r^{2}}dr\int_{\bS_{p}^{n}}|\xi^{\alpha}|^{2}d\sigma(\xi)
	\end{equation*}
	where $\xi_j=\sty\frac{z_{j}}{r^{\frac{1}{p_{j}}}}$, for all $j=1,\dots,n$.
	If we take $r=\sqrt{s}$, then $dr=\frac{1}{2}s^{-\frac{1}{2}}ds$, and so
	\[
		I = 2\left(\sum_{j=1}^{n}\frac{1}{p_j}\right) C_{p}^{n}\int_{0}^{\infty}s^{\left(\sum_{j=1}^{n}\frac{\alpha_j+1}{p_j}\right)-1} \frac{ds}{2}\int_{\bS_{p}^{n}}|\xi^{\alpha}|^{2}d\sigma(\xi)
	\]
	which is equivalent to
	\begin{equation} \label{aux2}
		I = \sum_{j=1}^{n}\frac{1}{p_j}C_{p}^{n}
			\Gamma\left(\sum_{j=1}^{n}\frac{\alpha_j+1}{p_j}\right)
			\int_{\bS_{p}^{n}}|\xi^{\alpha}|^{2}d\sigma(\xi).
	\end{equation}
	From \eqref{aux1} and \eqref{aux2} we obtain the identity
	\begin{equation*}
		\pi^{n}\prod_{j=1}^{n}\frac{1}{p_j}
			\Gamma\left(\frac{\alpha_{k}+1}{p_j}\right) =
			\sum_{j=1}^{n}\frac{1}{p_j}C_{p}^{n}
			\Gamma\left(\sum_{j=1}^{n}\frac{\alpha_{k}+1}{p_j}\right)
			\int_{\bS_{p}^{n}}|\xi^{\alpha}|^{2}d\sigma(\xi).
	\end{equation*}
	Hence, we conclude that
	\begin{align*}
		\int_{\bS_{p}^{n}}|\xi^{\alpha}|^{2} d\sigma(\xi)
		&=\frac{\sty\pi^{n}\prod_{j=1}^{n} \left[\frac{1}{p_j}
		\Gamma\left(\frac{\alpha_j+1}{p_j}\right)\right]}
		{\sty C_{p}^{n}\left(\sum_{j=1}^{n}\frac{1}{p_j}\right)
		\Gamma\left(\sum_{j=1}^{n}\frac{\alpha_j+1}{p_j}\right)} \\
		&= \frac{\sty\left(\prod_{j=1}^{n}p_j\right)
		\Gamma\left(\sum_{j=1}^{n}\frac{1}{p_j}+1\right)
		\prod_{j=1}^{n}\left[\frac{1}{p_j}
		\Gamma\left(\frac{\alpha_j+1}{p_j}\right)\right]}
		{\sty\left[\prod_{j=1}^{n}\Gamma\left(\frac{1}{p_j}\right)\right]
		\left(\sum_{j=1}^{n}\frac{1}{p_j}\right)
		\Gamma\left(\sum_{j=1}^{n}\frac{\alpha_j+1}{p_j}\right)} \\
		&=\frac{\sty\Gamma\left(\sum_{j=1}^{n}\frac{1}{p_j}\right)
		\prod_{j=1}^{n}\Gamma\left(\frac{\alpha_j+1}{p_j}\right)}{\sty \left[\prod_{j=1}^{n}
		\Gamma\left(\frac{1}{p_j}\right)\right]
		\Gamma\left(\sum_{j=1}^{n}\frac{\alpha_j+1}{p_j}\right)},
	\end{align*}
	which proves \eqref{integralsurface1}.
	
	We now recall that the volume of $\Omega_p^n(r)$ is
	given by \eqref{Volumen-r} and that
	\[
		V(r)=\int_0^r S(x)dx
	\]
	where $S(x)$ is the volume of $\bS_{p}^{n}(x)$. It follows that
	\[
		S(r) = V'(r) = \left(2\sum_{j=1}^{n}\frac{1}{p_j}\right)
		r^{2\sum_{j=1}^{n}\frac{1}{p_j}-1}V(1)
	\]
	In particular the volume of $\bS_{p}^{n}$ is given by
	\[
		S(1) = \left(2\sum_{j=1}^{n}\frac{1}{p_j}\right)V(1) =
		\frac{2\pi^n\sty\prod_{j=1}^{n}\Gamma\left(\frac{1}{p_{j}}\right)}
		{\sty\left[\prod_{j=1}^{n}p_{j}\right]
		\Gamma\left(\sum_{j=1}^{n}\frac{1}{p_{j}}\right)}
	\]
	Using this identity and \eqref{integralsurface1}
	we obtain \eqref{integralsurface2}.
\end{proof}

\section{Toeplitz operators with quasi-homogeneuos symbols}
Let $k=(k_1,\ldots, k_s) \in \Z_+^s$ be a partition of $n$; in other words we have
$|k| = k_1 + \ldots + k_s = n$. Given such $k$ we define
\[
	\widehat{k}_{j}=
	\begin{cases}
		0 & \text{ if } j = 0, \\
		\widehat{k}_{j-1} + k_j & \text{ if } j=1, \dots ,s
	\end{cases}
\]

Given $z \in \C^n$ and $k \in \Z_+^s$ a partition of $n$ we decompose $z$ into
$s$ pieces, each one of which has $k_j$ components for $j=1,\ldots, s$. This is
achieved by denoting
\[
	z_{(j)}=(z_{\widehat{k}_{j-1}},\dots, z_{\widehat{k}_j}),
\]
for $j = 1, \dots, s$. In particular, $z_{(j)} \in \C^{k_j}$ for every
$j = 1, \dots s$ and $z = (z_{(1)}, \dots z_{(s)})$ for
every $z \in \C^n$. We also note that the components of $z_{(j)} \in \C^{k_j}$ are
given by $(z_{(j)})_1, \dots, (z_{(j)})_{k_j}$.

Now let us choose $p \in \Z_+^n$ as before. Then, we denote
\begin{equation} \label{radial-components}
	r_{j}=\|z_{(j)}\|_p=
		\sqrt{\sum_{t=\widehat{k}_{j-1}+1}^{\widehat{k}_{j}}|z_{t}|^{2p_{t}}}
\end{equation}
for every $j = 1, \dots, s$. Furthermore, for every $z \in \C^n$ we denote
\[
	\xi_{(j)} =
	\left(\frac{(z_{(j)})_1}{r_j^\frac{1}{(p_{(j)})_1}}, \dots,
	\frac{(z_{(j)})_{k_j}}{r_j^\frac{1}{(p_{(j)})_{k_j}}}\right),
\]
for every $j = 1, \dots, s$. In words, the element $\xi_{(j)}$ is obtained
from $z_{(j)}$ by dividing each component of the latter by
$r_j = \|z_{(j)}\|_p$ powered to the exponents given by the reciprocals
of the corresponding components of $p_{(j)}$. Note that
$\xi_{(j)} \in \bS^{k_j}_{p_{(j)}}$ for every $j$.

\begin{definition}
  \label{def:quasi-homogeneous}
  Let $k = (k_1, \dots, k_s) \in \Z_+^{s}$ be a partition of $n$
  and let $\nu,\mu \in \N^{n}$ be such that
  \[
  \nu \cdot \mu = \nu_1\mu_0 + \dots + \nu_n\mu_n = 0.
  \]
  With the above notation, the $k$-quasi-homogeneous symbol associated
  to $\nu,\mu$ is the function $\varphi : \Omega_p^n \rightarrow \C$ given by
  \[
  \varphi(z) = \xi^{\nu}\overline{\xi}^{\mu} = \prod_{j=0}^s
  \left(\xi_{(j)}\right)^{\nu_{(j)}}
  \left(\xi_{(j)}\right)^{\mu_{(j)}}.
  \]
  We will denote by $\mH_k(\Omega_p^n)$ the set of $k$-quasi-homogeneous symbols
  on $\Omega_p^n$.
\end{definition}

It is a simple matter to see that for $p=(1,\dots,1)$ a $k$-quasi-homogeneous
symbol on $\Omega_p^n = \B^n$ is $k$-quasi-homogeneous in the sense of
\cite{NikolaiQuasi}.

\begin{definition}
  \label{def:quasi-radial}
  Let $k = (k_1, \dots, k_s) \in \Z_+^{s}$ be a partition of
  $n$. With the above notation, a $k$-quasi-radial symbol is a
  function function $a : \Omega_p^n \rightarrow \C$ that can be written as
  $a(z) = \widetilde{a}(r_1, \dots, r_s)$
  where $r_j$ is given by \ref{radial-components}.
  We will denote by $\mR_k(\Omega_p^n)$ the set of
  $k$-quasi-radial symbols on $\Omega_p^n$.
\end{definition}

As before, it is easy to see that for $p=(1,\dots,1)$ a $k$-quasi-radial
symbol on $\Omega_p^n = \B^n$ is $k$-quasi-radial in the sense of
\cite{NikolaiQuasi}

The two definitions above yield together the notion of
quasi-homogeneous quasi-radial symbol.

\begin{definition}
  \label{def:quasi-homog-radial}
  Let $k = (k_1, \dots, k_s) \in \Z_+^{s}$ be a partition of
  $n$. Then, a $k$-quasi-homogeneous quasi-radial symbol is a
  function $\Omega_p^n \rightarrow \C$ of the form $a\varphi$, where $a \in
  \mR_k(\Omega_p^n)$ and $\varphi \in \mH_k(\Omega_p^n)$; in this case, we will refer to $a$
  and $\varphi$ as the quasi-radial and quasi-homogeneous parts of the
  symbol, respectively. We will denote by $\mHR_k(\Omega_p^n)$ the set of
  $k$-quasi-homogeneous quasi-radial symbols.
\end{definition}

We observe that the domain $\Omega_{p}^{n}$ is a Reinhardt domain and so
the results from \cite{QV-Reinhardt} can be applied in this case.
In particular, \cite{QV-Reinhardt} implies that the set symbols
$\mR_{(1,\dots,1)}$ defines a family of commutative Toeplitz
operators on $\mA^2(\Omega_{p}^{n})$. Furthermore,
every Toeplitz operator $T_a$ with symbol $a \in\mR_k(\Omega_p^n)$ is diagonal with
respect to the standard monomial basis. Since $\mR_k(\Omega_p^n)$ is a subset
of $\mR_{(1,\dots,1)}(\Omega_p^n)$, the same properties hold for $\mR_k(\Omega_p^n)$.
The following result provides an explicit presentation of
such Toeplitz operators as diagonalizable operators.

\begin{lemma} \label{Lemma-kradial}
  Let $k \in \Z_+^s$ be a partition of $n$.
  Then, for any $k$-quasi-radial bounded measurable symbol
  $a \in \mR_k(\Omega_p^n)$, we have
  \[
	  T_a z^{\alpha}=\gamma_{a,k}(\alpha)z^{\alpha},
  \]
  for every $\alpha\in \N^n$, where
  \begin{align}
   \label{spectrum-radial}
    \notag
    \gamma_{a,k}(\alpha)& =
    \frac{\sty 4^{s}\Gamma\left(\sum_{j=1}^{n}\frac{\alpha_{j}+1}{p_{j}}+1\right)
    \prod_{j=1}^{s} \left(\sum_{t=\widehat{k}_{j-1}+1}^{\widehat{k}_{j}}\frac{1}{p_{t}}\right)}
    {\sty\prod_{j=1}^{s}\Gamma\left(\sum_{t=\widehat{k}_{j-1}+1}^{\widehat{k}_{j}}   \frac{\alpha_{t}+1}{p_{t}}\right)
    }\\
    & \times \int_{\Delta_{s}^{n}} a(r_1,\ldots, r_s)\prod_{j=1}^{s}
    r_{j}^{\left(2\sum_{t=\widehat{k}_{j-1}+1}^{\widehat{k}_{j}}\left(\frac{\alpha_{t}+1}{p_{t}}\right)-1\right)}dr_{j}
  \end{align}
and $\Delta_{s}^{n}=\{ (r_1,\ldots, r_s) \in\R_+ : r_1^2+\cdots+ r_s^2<1\}$.
\end{lemma}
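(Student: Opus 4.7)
My plan is to use the already-noted fact (inherited from \cite{QV-Reinhardt}) that $T_a$ is diagonal on the monomial basis, so that writing $T_a z^{\alpha}=\gamma_{a,k}(\alpha)z^{\alpha}$ one has
\[
  \gamma_{a,k}(\alpha)=\frac{\langle a z^{\alpha},z^{\alpha}\rangle}{\|z^{\alpha}\|^{2}}=\frac{\int_{\Omega_{p}^{n}}a(z)\,|z^{\alpha}|^{2}\,dV(z)}{\|z^{\alpha}\|^{2}},
\]
and the denominator is already given in closed form by \eqref{inner-product-monomials}. Thus the real task is the explicit evaluation of the numerator.

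For that I would introduce block-wise $p$-polar coordinates: on each block $j=1,\dots,s$ set $(z_{(j)})_{t}=r_{j}^{1/p_{t}}(\xi_{(j)})_{t}$, so that $\xi_{(j)}\in\bS^{k_{j}}_{p_{(j)}}$ and $\sum_{t\in\{\widehat{k}_{j-1}+1,\dots,\widehat{k}_{j}\}}|(z_{(j)})_{t}|^{2p_{t}}=r_{j}^{2}$. Running the argument of the first lemma of Section~2 block by block, the Lebesgue measure on $\mathbb{C}^{n}$ decomposes as
\[
  dV(z)=\prod_{j=1}^{s}r_{j}^{2\sum_{t=\widehat{k}_{j-1}+1}^{\widehat{k}_{j}}1/p_{t}-1}\,dr_{j}\,dS_{(j)}(\xi_{(j)}),
\]
and $\Omega_{p}^{n}$ pulls back exactly to $\Delta_{s}^{n}\times\prod_{j}\bS^{k_{j}}_{p_{(j)}}$, since $\sum_{j}r_{j}^{2}=\sum_{t}|z_{t}|^{2p_{t}}$. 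Factoring the monomial as $|z^{\alpha}|^{2}=\prod_{j}r_{j}^{2\sum_{t\in j}\alpha_{t}/p_{t}}\,|\xi_{(j)}^{\alpha_{(j)}}|^{2}$ and noting that $a$ depends only on $(r_{1},\dots,r_{s})$, the integral splits into an $r$-integral over $\Delta_{s}^{n}$ (the $r_{j}$-exponents combining to $2\sum_{t\in j}(\alpha_{t}+1)/p_{t}-1$) and a product of sphere integrals.

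The sphere integrals are then computed one block at a time via \eqref{integralsurface2} applied to $\bS^{k_{j}}_{p_{(j)}}$, yielding a factor of $\frac{2\pi^{k_{j}}\prod_{t\in j}\Gamma((\alpha_{t}+1)/p_{t})}{[\prod_{t\in j}p_{t}]\,\Gamma(\sum_{t\in j}(\alpha_{t}+1)/p_{t})}$ for each $j$. On taking the product over $j$, the factors $\prod_{j}\pi^{k_{j}}$ collapse to $\pi^{n}$ and the double products $\prod_{j}\prod_{t\in j}$ collapse into single products $\prod_{t=1}^{n}$; consequently the combination $\pi^{n}\prod_{t}\Gamma((\alpha_{t}+1)/p_{t})/[\prod_{t}p_{t}]$ cancels exactly against its counterpart in $\|z^{\alpha}\|^{2}$ from \eqref{inner-product-monomials}. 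What survives is the ratio of gamma functions $\Gamma(\sum_{t}(\alpha_{t}+1)/p_{t}+1)/\prod_{j}\Gamma(\sum_{t\in j}(\alpha_{t}+1)/p_{t})$, the numerical constants, and the $\Delta_{s}^{n}$ integral, assembling into the stated expression after rewriting using $S_{(j)}(1)=2(\sum_{t\in j}1/p_{t})V_{(j)}(1)$ to bring the block-wise Jacobian constants into the form shown.

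The main obstacle is purely notational: keeping track of the three overlapping levels of indexing (the block index $j$, the in-block position, and the absolute index $t\in\{1,\dots,n\}$ via $(p_{(j)})_{t}=p_{\widehat{k}_{j-1}+t}$) and identifying which double products telescope into single products. No analytic subtleties arise beyond Fubini and the explicit identities \eqref{inner-product-monomials}, \eqref{integralsurface1}, \eqref{integralsurface2} already proved in Section~2.
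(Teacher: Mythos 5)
Your proposal follows essentially the same route as the paper's proof: both evaluate $\langle a z^{\alpha}, z^{\alpha}\rangle$ by block-wise $p$-polar coordinates, reduce each sphere factor to \eqref{integralsurface2}, and divide by $\|z^{\alpha}\|^{2}$ from \eqref{inner-product-monomials}, so the argument is sound. One remark on the final assembly: your (correct) Jacobian $dV=\prod_{j} r_{j}^{2\sum_{t\in j}1/p_{t}-1}\,dr_{j}\,dS_{(j)}$ produces an overall constant $2^{s}$ rather than the $4^{s}\prod_{j}\bigl(\sum_{t\in j}1/p_{t}\bigr)$ displayed in \eqref{spectrum-radial} --- the paper's computation carries an extra factor $\prod_{j}2\bigl(\sum_{t\in j}1/p_{t}\bigr)$ alongside $dS_{j}$ --- so your derivation and the printed formula differ by a constant (check the case $s=1$, $a\equiv 1$, where one must get $\gamma=1$); this discrepancy is immaterial to the later commutativity results, but the appeal to $S_{(j)}(1)=2(\sum_{t\in j}1/p_{t})V_{(j)}(1)$ cannot by itself introduce that factor into a correctly computed integral.
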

\begin{proof}
	Let $\alpha\in \N^n$. Then, we have
	\[
	\langle T_a z^\alpha,z^\alpha\rangle =
	\langle a(r_1,\ldots, r_s)z^\alpha,z^\alpha
	\rangle=\int_{\Omega_{p}^{n}}a(r_1,\ldots, r_s)|z^\alpha|^{2}dV
	\]

	Consider the change of variables $(z_{(j)})_t = r_j^{\frac{1}{p_{(j),t}}} (\xi_{(j)})_t$,
	for every $t = 1, \dots, k_j$ and $j = 1, \dots, s$. Then, we obtain
	\begin{align*}
		\langle a(r_1,\ldots, r_s)z^\alpha,z^\alpha\rangle &=
		\int_{\Delta_{s}^{n}} a(r_1,\ldots, r_s)\prod_{j=1}^{s}
		r_{j}^{\left[2\sum_{t=\widehat{k}_{j-1}+1}^{\widehat{k}_{j}}
		\left(\frac{\alpha_{t}+1}{p_{t}}\right)-1\right]}dr_{j}  \\
		&\times \prod_{j=1}^{s} 2\left(
		\sum_{t=\widehat{k}_{j-1}}^{\widehat{k}_{j}}\frac{1}{p_{t}}\right)
		\int_{S_{(j)}^{k_j}}|\xi_{(j)}|^{\alpha_{(j)}}dS_{j} \\
		&= \int_{\Delta_{s}^{n}} a(r_1,\ldots, r_s)\prod_{j=1}^{s}
		r_{j}^{\left[2\sum_{t=\widehat{k}_{j-1}+1}^{\widehat{k}_{j}}
		\left(\frac{\alpha_{t}+1}{p_{t}}\right)-1\right]}dr_{j}\\
		&\times \frac{\sty4^{s}\pi^{n}\prod_{j=1}^{n}
		\Gamma\left(\frac{\alpha_j+1}{p_j}\right) \prod_{j=1}^{s}
		\left(\sum_{t=\widehat{k}_{j-1}+1}^{\widehat{k}_{j}}\frac{1}{p_{t}}\right)}
		{\sty\prod_{j=1}^{s}\Gamma\left(
		\sum_{t=\widehat{k}_{j-1}+1}^{\widehat{k}_{j}}\frac{\alpha_{t}+1}{p_{t}}\right)
		\prod_{j=1}^{n}p_j},
	\end{align*}
	and the result follows from \eqref{inner-product-monomials}.
\end{proof}

We now determine the action on monomials of the Toeplitz operators
with symbols in $\mHR_k(\Omega_p^n)$.

\begin{lemma} \label{Lemma-khomogeneous}
  Let $k \in \Z_+^s$ be a partition of $n$ and $\nu, \mu \in \N^n$.
  If $a\xi^\nu\overline{\xi}^\mu = a(r_1, \dots, r_l) \xi^\nu\overline{\xi}^\mu$
  is an element of $\mHR_k(\Omega_p^n)$, then the Toeplitz operator
  $T_{a\xi^\nu\overline{\xi}^\mu}$ acts on monomials $z^{\alpha}$ with
  $\alpha\in \N^n$ as follows
  \[
  T_{a\xi^\nu\overline{\xi}^\mu} z^{\alpha}=
  \begin{cases}
    \tilde{\gamma}_{a,k,\nu,\mu}(\alpha)z^{\alpha+\nu-\mu} &
    \text{ for } \alpha + \nu - \mu \in \Z^n_+ \\
    0 & \text{ for } \alpha + \nu - \mu \not\in \Z^n_+
  \end{cases}
  \]
  where
	\begin{multline} \label{spectrum-homoge}
	 	\tilde{\gamma}_{a,k,\nu,\mu}(\alpha)
	 	= {\sty \int_{\Delta_{s}^{n}}a(r_1,\ldots,r_s)
	 	\prod_{j=1}^{s} r_{j}^{\left(\sum_{t=\widehat{k}_{j-1}+1}^{\widehat{k}_{j}}
		\frac{2\alpha_{t}+\nu_{t}-\mu_t+2}{p_{t}}-1\right)}dr_{j}} \\
		\times \frac{\sty 4^{s}\prod_{j=1}^{n}
		\Gamma\left(\frac{\alpha_{j}+\nu_{j}+1}{p_{j}}\right)
		\prod_{j=1}^{s}\left(\sum_{t=\widehat{k}_{j-1}+1}^{\widehat{k}_{j}}
		\frac{1}{p_{t}}\right)
		\Gamma\left(\sum_{j=1}^{n}\frac{\alpha_j+\nu_j-\mu_{j}+1}{p_j}+1\right)}
		{\sty \prod_{j=1}^{n} \Gamma\left(\frac{\alpha_j+\nu_j-\mu_{k}+1}{p_j}\right)
		\prod_{j=1}^{s} \Gamma\left(\sum_{t=\widehat{k}_{j-1}+1}^{\widehat{k}_{j}}
		\frac{\alpha_{t}+\nu_{t}+1}{p_{t}}\right)}
	\end{multline}
\end{lemma}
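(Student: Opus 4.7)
The plan is to compute $T_{a\xi^\nu\overline{\xi}^\mu} z^\alpha$ by expanding it in the orthogonal monomial basis $\{z^\beta\}_{\beta\in\N^n}$ and then showing that at most one coefficient survives. Concretely, since $T_{a\xi^\nu\overline{\xi}^\mu} z^\alpha = P_p(a\xi^\nu\overline{\xi}^\mu z^\alpha)$ lies in $\mA^2(\Omega_p^n)$, I would write
\[
  T_{a\xi^\nu\overline{\xi}^\mu} z^\alpha
  = \sum_{\beta\in\N^n}
  \frac{\langle a\xi^\nu\overline{\xi}^\mu z^\alpha, z^\beta\rangle}{\|z^\beta\|^2}\, z^\beta,
\]
with $\|z^\beta\|^2$ read off from \eqref{inner-product-monomials}. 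The task then reduces to evaluating the numerator.

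Next I would change variables using the $k$-block $p$-polar coordinates $(z_{(j)})_t = r_j^{1/(p_{(j)})_t}(\xi_{(j)})_t$. Iterating the argument of the first lemma on each block $\C^{k_j}$ gives a decoupled volume form
\[
  dV = \prod_{j=1}^{s}\left(2\sum_{t=\widehat{k}_{j-1}+1}^{\widehat{k}_{j}}\frac{1}{p_t}\right)
  r_j^{2\sum_{t=\widehat{k}_{j-1}+1}^{\widehat{k}_{j}}\frac{1}{p_t}-1}\,dr_j\,dS_j
\]
on $\Delta_s^n\times\prod_j\bS^{k_j}_{p_{(j)}}$. Inserting the identities $z^\alpha\overline{z^\beta} = \prod_j r_j^{\sum_t(\alpha_{(j),t}+\beta_{(j),t})/(p_{(j)})_t}\xi^\alpha\overline{\xi^\beta}$ and $a(r_1,\dots,r_s)\xi^\nu\overline{\xi^\mu}$, the inner product factors into the radial integral over $\Delta_s^n$ (depending only on the $r_j$'s) and a product of spherical integrals $\int_{\bS^{k_j}_{p_{(j)}}}\xi_{(j)}^{\alpha_{(j)}+\nu_{(j)}}\overline{\xi_{(j)}^{\beta_{(j)}+\mu_{(j)}}}dS_j$, one per block.

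Then I would invoke \eqref{integralsurface2} block by block: each spherical integral vanishes unless $\alpha_{(j)}+\nu_{(j)} = \beta_{(j)}+\mu_{(j)}$ for every $j$, i.e.\ unless $\beta = \alpha+\nu-\mu$. Since $\beta$ must lie in $\N^n$, this forces $\alpha+\nu-\mu\in\Z_+^n$ and explains the case split in the statement. When this integrality holds, the exponent of $r_j$ appearing in the radial integral collapses to $\sum_{t=\widehat{k}_{j-1}+1}^{\widehat{k}_j}(2\alpha_t+\nu_t-\mu_t+2)/p_t - 1$, matching the one in \eqref{spectrum-homoge}.

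Finally, I would substitute the resulting nonzero inner product together with $\|z^{\alpha+\nu-\mu}\|^2$ from \eqref{inner-product-monomials} into the formula above and simplify. The cancellation of the factors $\pi^n$ and $\prod_j p_j$ produces the $4^s$ and the two $\Gamma$-ratios displayed in \eqref{spectrum-homoge}. The substantive content is really the coordinate decomposition and the vanishing criterion from \eqref{integralsurface2}; the main obstacle is purely bookkeeping, namely correctly collecting all the constants, $\Gamma$-factors, and block-by-block products into the final closed form for $\tilde\gamma_{a,k,\nu,\mu}(\alpha)$.
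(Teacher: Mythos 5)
Your proposal is correct and follows essentially the same route as the paper: it computes $\langle a\xi^\nu\overline{\xi}^\mu z^\alpha, z^\beta\rangle$ via the block $p$-polar change of variables, factors the result into a radial integral over $\Delta_s^n$ times block-by-block spherical integrals, invokes \eqref{integralsurface2} to force $\beta=\alpha+\nu-\mu$ (hence the case split when $\alpha+\nu-\mu\notin\Z_+^n$), and normalizes via \eqref{inner-product-monomials} and the orthonormal basis \eqref{eq:orth-basis}. The remaining work is the same $\Gamma$-factor bookkeeping the paper carries out.
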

\begin{proof}
	Let $\alpha,\beta \in \N^n$. Then, we have
	\begin{align*}
		\langle T_{a\xi^{\nu}\overline{\xi}^{\mu}}z^{\alpha},z^{\beta}\rangle
		&=\langle a\xi^{\nu}\overline{\xi}^{\mu}z^{\alpha},z^{\beta}\rangle \\
		&=\int_{\Omega_{p}^{n}} a(r_1,\ldots, r_s) \xi^\nu \overline{\xi}^\mu
		z^{\alpha} \overline{z}^{\beta} dV
	\end{align*}
	
	Consider the change of variables $(z_{(j)})_t = r_j^{\frac{1}{p_{(j),t}}} (\xi_{(j)})_t$,
	for $t = 1, \dots, k_j$ and $j = 1, \dots, s$. Then, we obtain
	\begin{align*}
		\langle T_{a\xi^{\nu}\overline{\xi}^{\mu}}z^{\alpha},z^{\beta}\rangle
		&= \int_{\Delta_{s}^{n}}a(r_1,\ldots,r_s)\prod_{j=1}^{s}2
		\left(\sum_{t=\widehat{k}_{j-1}+1}^{\widehat{k}_{j}}\frac{1}{p_{t}}\right) \\
		&\quad\quad\times r_{j}^{\left(\sum_{t=\widehat{k}_{j-1}+1}^{\widehat{k}_{j}}
		\frac{\alpha_{t}+\beta_{t}+2}{p_{t}}-1\right)}dr_{j}  \\
		&\times \prod_{j=1}^{s}\int_{S_{p_{(j)}}^{k_{j}}}\xi^{\nu_{(j)}+
		\alpha_{(j)}}\overline{\xi}^{\mu_{(j)}+\beta_{(j)}}dS_j  \\
		&= \delta_{\alpha+\nu,\beta+\mu}
		\int_{\Delta_{s}^{n}}a(r_1,\ldots,r_s)\prod_{j=1}^{s}
		r_{j}^{\left[\sum_{t=\widehat{k}_{j-1}+1}^{\widehat{k}_{j}}
		\frac{\alpha_{t}+\beta_{t}+2}
		{p_{t}}-1\right]}dr_{j} \\
		&\times \frac{\sty 4^{s}\pi^{n}\prod_{t=1}^{n}
		\Gamma\left(\frac{\alpha_{t}+\nu_{t}+1}{p_{t}}\right)
		\prod_{j=1}^{s}\left(\sum_{t=\widehat{k}_{j-1}+1}^{\widehat{k}_{j}}
		\frac{1}{p_{t}}\right)}
		{\sty\prod_{j=1}^{n}p_j
		\prod_{j=1}^{s}\Gamma\left(\sum_{t=\widehat{k}_{j-1}+1}^{\widehat{k}_{j}}
		\frac{\alpha_{t}+\nu_{t}+1}{p_{t}}\right)}
	\end{align*}
	Observe that this expression is non zero if and only if $\beta =
	\alpha + \nu - \mu$, which a priori belongs to $\N^n$.
	We conclude the result from the orthonormality of the basis defined
	in \eqref{eq:orth-basis}.
\end{proof}

\section{Commutativity results for quasi-homogeneuos symbols}
\label{sec:commutativity}
We obtain commutativity results for Toeplitz operators on the domain
$\Omega_p^n$ that extend those found in \cite{NikolaiQuasi}.

\begin{theorem}
	\label{thm:commutativity-(1,k)}
  	Let $k = (k_1, \dots, k_s) \in \Z_+^s$ be a partition of $n$ and $\nu,
  	\mu \in \N^n$ a pair of orthogonal multi-indices.  Let $a_1, a_2 \in
  	\mR_{k}$ be non identically zero and $\xi^\nu \overline{\xi}^\mu \in
  	\mH_{k}$. Then, the Toeplitz operators $T_{a_1}$ and
  	$T_{a_2\xi^\nu \overline{\xi}^\mu}$ commute on the Bergman space
    $\mA^2(\Omega_p^n)$
   	if and only if
   	\begin{equation} \label{eq:comm-condition}
	   	\sum_{t=\widehat{k}_{j-1}+1}^{\widehat{k}_{j}}
		\frac{\nu_{t}-\mu_t}{p_{t}} =
		\sum_{t=1}^{k_j} \frac{(\nu_{(j)})_t - (\mu_{(j)})_t}{(p_{(j)})_t}
		=0
	\end{equation}
	for each $j=1,\dots, s$.
\end{theorem}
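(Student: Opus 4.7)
The plan is to translate the operator identity $[T_{a_1}, T_{a_2\xi^\nu\overline{\xi}^\mu}]=0$ into a scalar identity on monomials via Lemmas~\ref{Lemma-kradial} and~\ref{Lemma-khomogeneous}. Combining them gives, for every $\alpha\in\N^n$ with $\alpha+\nu-\mu\in\Z_+^n$,
\[
[T_{a_1}, T_{a_2\xi^\nu\overline{\xi}^\mu}]\,z^\alpha
= \bigl(\gamma_{a_1,k}(\alpha+\nu-\mu)-\gamma_{a_1,k}(\alpha)\bigr)\,\tilde{\gamma}_{a_2,k,\nu,\mu}(\alpha)\,z^{\alpha+\nu-\mu},
\]
while both compositions send $z^\alpha$ to zero if $\alpha+\nu-\mu\notin\Z_+^n$. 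Since the monomials form an orthogonal basis of $\mA^2(\Omega_p^n)$, commutation is equivalent to the scalar identity
$\bigl(\gamma_{a_1,k}(\alpha+\nu-\mu)-\gamma_{a_1,k}(\alpha)\bigr)\,\tilde{\gamma}_{a_2,k,\nu,\mu}(\alpha)=0$
holding for every such admissible $\alpha$.

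The ``if'' direction then follows by inspection: formula~\eqref{spectrum-radial} exhibits $\gamma_{a_1,k}(\alpha)$ as a function of $\alpha$ only through the $s$ block sums $c_j(\alpha)=\sum_{t=\widehat{k}_{j-1}+1}^{\widehat{k}_j}(\alpha_t+1)/p_t$, and condition~\eqref{eq:comm-condition} is precisely $c_j(\nu-\mu)=0$ for each $j$. This makes $\gamma_{a_1,k}(\alpha+\nu-\mu)=\gamma_{a_1,k}(\alpha)$ for all $\alpha$, so the scalar identity holds and the operators commute.

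For the ``only if'' direction I would argue via a density and asymptotic analysis on Mellin transforms. After the change $s_j=r_j^2$, the integral factor appearing in~\eqref{spectrum-homoge} is a multi-dimensional Mellin transform of $a_2$ at arguments depending affinely on $\alpha$; since $a_2\not\equiv 0$, this is a nontrivial analytic function on a right half-space of $\C^s$, so $\tilde{\gamma}_{a_2,k,\nu,\mu}(\alpha)\neq 0$ on a set $S\subset\N^n$ whose $c$-image accumulates at infinity in every coordinate direction. On $S$ the scalar identity forces $\gamma_{a_1,k}(\alpha+\nu-\mu)=\gamma_{a_1,k}(\alpha)$, which, writing $\gamma_{a_1,k}=F\circ c$ with $d_j=c_j(\nu-\mu)$, reads $F(c+d)=F(c)$; by analyticity this identity must hold on all of $\C^s$. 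Using the explicit form of $F$ from~\eqref{spectrum-radial} and applying Stirling's formula as each coordinate $c_{j_0}\to +\infty$ with the others held fixed, the Gamma-function ratio contributes a factor $c_{j_0}^{\sum_{j\neq j_0}d_j}$ that can only be absorbed into the Mellin transform of $a_1$ if $\sum_{j\neq j_0}d_j=0$; imposing this for every $j_0$ yields $d=0$ (for $s\geq 2$ by linear algebra, and for $s=1$ by a M\"untz-type completeness argument iterating the resulting one-variable identity and invoking $a_1\not\equiv 0$), which is exactly condition~\eqref{eq:comm-condition}.

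The main obstacle is this converse step: justifying analytic continuation from the discrete set $S$, and ruling out that a nontrivial Mellin transform of $a_1$ could conspire with the Gamma factors to permit a nonzero shift $d$. The cleanest route, as outlined, is to freeze all but one block variable and analyze the resulting one-dimensional ratio $F(c+d)/F(c)$ via Stirling; iterating over all blocks, together with a M\"untz-type completeness argument for the $s=1$ case, isolates $d_j=0$ for every $j$.
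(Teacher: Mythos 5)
Your reduction of the commutator to the scalar identity $\bigl(\gamma_{a_1,k}(\alpha+\nu-\mu)-\gamma_{a_1,k}(\alpha)\bigr)\,\tilde{\gamma}_{a_2,k,\nu,\mu}(\alpha)=0$, and your proof of the ``if'' direction, coincide with the paper's argument: the paper writes out both compositions via Lemmas~\ref{Lemma-kradial} and \ref{Lemma-khomogeneous}, and sufficiency is exactly your observation that $\gamma_{a_1,k}(\alpha)$ depends on $\alpha$ only through the block sums $c_j(\alpha)=\sum_{t=\widehat{k}_{j-1}+1}^{\widehat{k}_j}(\alpha_t+1)/p_t$, which are unchanged under $\alpha\mapsto\alpha+\nu-\mu$ precisely when \eqref{eq:comm-condition} holds.

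The gap is in the converse, which you rightly flag as the main obstacle but do not close --- and which cannot be closed as you have set it up. The ``only if'' direction is false for a fixed pair of symbols: take $a_1\equiv 1$, which is a nonzero bounded $k$-quasi-radial symbol with $T_{a_1}=I$ (equivalently $\gamma_{a_1,k}\equiv 1$), so $T_{a_1}$ commutes with $T_{a_2\xi^\nu\overline{\xi}^\mu}$ regardless of $\nu,\mu$. Hence any converse argument that treats $a_1$ as an arbitrary fixed nonzero element of $\mR_k$, as yours does, must fail somewhere; concretely it fails at the Stirling step, where you assert that the factor $c_{j_0}^{\sum_{j\neq j_0}d_j}$ from the Gamma ratio ``can only be absorbed into the Mellin transform of $a_1$'' when the exponent vanishes. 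Mellin transforms of bounded symbols routinely exhibit power-law asymptotics (e.g.\ $a_1(r)=(1-r^2)^\beta$ gives $\int_0^1 a_1(r)\,r^{2c-1}\,dr\sim \tfrac{1}{2}\Gamma(\beta+1)\,c^{-\beta-1}$), so the claimed non-compensation is unjustified; the analytic continuation from the discrete set $S$ to $\C^s$ is also asserted without the growth control and density a Carlson-type theorem would need. For what it is worth, the paper itself offers no converse argument either --- it displays the two compositions and simply declares the equivalence --- so your instinct that something remains to be proved here is correct, but the repair must begin with the statement itself: either quantify universally over $a_1$ (so that one may choose $a_1$ making $\gamma_{a_1,k}$ injective on the relevant block-sum values) or impose a non-degeneracy hypothesis on $a_1$, and only then produce a single admissible $\alpha$ with $\tilde{\gamma}_{a_2,k,\nu,\mu}(\alpha)\neq 0$ and $\gamma_{a_1,k}(\alpha+\nu-\mu)\neq\gamma_{a_1,k}(\alpha)$.
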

\begin{proof}
	Let $\alpha\in \Z_+^n$ be given. First note that if $\alpha + \nu - \mu
  	\not\in \Z_+^n$, then the Lemmas~\ref{Lemma-kradial} and
  	\ref{Lemma-khomogeneous} imply that both
  	$T_{a_1}T_{a_2\xi^\nu\overline{\xi}^\mu} z^{\alpha}$ and
  	$T_{a_2\xi^\nu\overline{\xi}^\mu} T_{a_1} z^{\alpha}$ vanish. Hence, we
  	can assume that $\alpha+\nu-\mu \in \Z_+^n$.
  	
  	Applying again Lemmas~\ref{Lemma-kradial} and \ref{Lemma-khomogeneous}
  	we obtain
	\begin{align*}
    	& T_{a_1}T_{a_2\xi^\nu\overline{\xi}^\mu} z^{\alpha} = \\
	    &  \frac{\sty 4^{s}\prod_{t=1}^{n}
	    \Gamma\left(\frac{\alpha_{t}+\nu_{t}+1}{p_{t}}\right)
	    \prod_{j=1}^{s}\left(\sum_{t=\widehat{k}_{j-1}+1}^{\widehat{k}_{j}}
		\frac{1}{p_{t}}\right)
		\Gamma\left(\sum_{j=1}^{n}\frac{\alpha_j+\nu_j-\mu_j+1}{p_j}+1\right)}
		{\sty \prod_{j=1}^{n}
		\Gamma\left(\frac{\alpha_j+\nu_j-\mu_j+1}{p_j}\right)
		\prod_{j=1}^{s} \Gamma\left(\sum_{t=\widehat{k}_{j-1}+1}^{\widehat{k}_{j}}
		\frac{\alpha_{t}+\nu_{t}+1}{p_{t}}\right)}  \\
	 	&\times {\sty\int_{\Delta_{s}^{n}}a_2(r_1,\ldots,r_n)
	 	\prod_{j=1}^{s}r_{j}^{\left(\sum_{t=\widehat{k}_{j-1}+1}^{\widehat{k}_{j}}
		\frac{2\alpha_{t}+\nu_{t}-\mu_t+2}{p_{t}}-1\right)}dr_{j}}  \\
    	&\times \frac{\sty 4^{s}\Gamma\left(\sum_{j=1}^{n}
    	\frac{\alpha_{j}+\nu_j-\mu_j+1}{p_{j}}+1\right)\prod_{j=1}^{s}
    	\left(\sum_{t=\widehat{k}_{j-1}}^{\widehat{k}_{j}}\frac{1}{p_{t}}\right)}
    	{\sty\prod_{j=1}^{s}\Gamma\left(\sum_{t=\widehat{k}_{j-1}+1}^{\widehat{k}_{j}}
    	\frac{\alpha_{t}+\nu_{t}-\mu_{t}+1}{p_{t}}\right)}  \\
    	&\times \int_{\Delta_{s}^{n}} a_1(r_1,\ldots, r_m)\prod_{j=1}^{m}
    	r_{j}^{\left(2\sum_{t=\widehat{k}_{j-1}+1}^{\widehat{k}_{j}}
    	\left(\frac{\alpha_{t}+\nu_t-\mu_t+1}{p_{t}}\right)-1\right)}dr_{j} \\
    	&\times  z^{\alpha+\nu-\mu}.
  	\end{align*}
	And similarly, we have
	\begin{align*}
    	& T_{a_2\xi^\nu\overline{\xi}^\mu}  T_{a_1} z^{\alpha} = \\
    	& \frac{\sty 4^{s}\Gamma\left(\sum_{j=1}^{n}
    	\frac{\alpha_{j}+1}{p_{j}}+1\right)\prod_{j=1}^{s}
    	\left(\sum_{t=\widehat{k}_{j-1}}^{\widehat{k}_{j}}\frac{1}{p_{t}}\right)}
    	{\sty\prod_{j=1}^{s}\Gamma\left(\sum_{t=\widehat{k}_{j-1}+1}^{\widehat{k}_{j}}
    	\frac{\alpha_{t}+1}{p_{t}}\right)}  \\
    	&\times \int_{\Delta_{s}^{n}} a_1(r_1,\ldots, r_m)\prod_{j=1}^{s}
    	r_{j}^{\left(2\sum_{t=\widehat{k}_{j-1}+1}^{\widehat{k}_{j}}
    	\left(\frac{\alpha_{t}+1}{p_{t}}\right)-1\right)}dr_{j}  \\
		&\times \frac{\sty 4^{s}\prod_{t=1}^{n}
		\Gamma\left(\frac{\alpha_{t}+\nu_{t}+1}{p_{t}}\right)
		\prod_{j=1}^{s}\left(\sum_{t=\widehat{k}_{j-1}+1}^{\widehat{k}_{j}}
		\frac{1}{p_{t}}\right)}
		{\sty \prod_{j=1}^{n}
		\Gamma\left(\frac{\alpha_j+\nu_j-\mu_j+1}{p_j}\right)
		}  \\
		&\times \frac{\sty\Gamma\left(\sum_{j=1}^{n}\frac{\alpha_j+\nu_j-\mu_j+1}{p_j}+1\right)}
		{\sty\prod_{j=1}^{s} \Gamma\left(\sum_{t=\widehat{k}_{j-1}+1}^{\widehat{k}_{j}}
				\frac{\alpha_{t}+\nu_{t}+1}{p_{t}}\right)} \\
		&\times
		{\sty\int_{\Delta_{s}^{n}}a_2(r_1,\ldots,r_n)
		\prod_{j=1}^{s}r_{j}^{\left(\sum_{t=\widehat{k}_{j-1}+1}^{\widehat{k}_{j}}
		\frac{2\alpha_{t}+\nu_{t}-\mu_t+2}{p_{t}}-1\right)}dr_{j}}\\
		&\times  z^{\alpha+\nu-\mu}
	\end{align*}
	This implies that $T_{a_1}T_{a_2\xi^\nu\overline{\xi}^\mu} z^\alpha =
	T_{a_2\xi^\nu\overline{\xi}^\mu} T_{a_1} z^\alpha$ for all $\alpha$
	if and only if
	\[
		\sum_{t=\widehat{k}_{j-1}+1}^{\widehat{k}_{j}}
		\frac{\nu_{t}-\mu_t}{p_{t}}=0
	\]
	where $j=1,\dots, s$.
\end{proof}

If we assume that \eqref{eq:comm-condition} holds for all
$j = 1, \dots, s$, then equations \eqref{spectrum-radial}
and \eqref{spectrum-homoge} imply the following identity

\begin{align} \label{spectrum-homoge-2}
	&\tilde{\gamma}_{a,k,\nu,\mu}(\alpha) = \\
	&{\sty \int_{\Delta_{m}^{n}}a(r_1,\ldots,r_n)
	\prod_{j=1}^{s}r_{j}^{\left(\sum_{t=\widehat{k}_{j-1}+1}^{\widehat{k}_{j}}
	\frac{2\alpha_{t}+2}{p_{t}}-1\right)}dr_{j}}  \notag\\
	&\times \frac{\sty 4^{s}\prod_{t=1}^{n}
	\Gamma\left(\frac{\alpha_{t}+\nu_{t}+1}{p_{t}}\right)
	\prod_{j=1}^{s}\left(\sum_{t=\widehat{k}_{j-1}+1}^{\widehat{k}_{j}}
	\frac{1}{p_{t}}\right)
	\Gamma\left(\sum_{j=1}^{n}\frac{\alpha_j+1}{p_j}+1\right)}
	{\sty \prod_{j=1}^{n} \Gamma\left(\frac{\alpha_j+\nu_j-\mu_j+1}{p_j}\right)
	\prod_{j=1}^{s} \Gamma\left(\sum_{t=\widehat{k}_{j-1}+1}^{\widehat{k}_{j}}
	\frac{\alpha_{t}+\nu_{t}+1}{p_{t}}\right)} \notag \\
	&= \frac{\sty\prod_{t=1}^{n}
	\Gamma\left(\frac{\alpha_{t}+\nu_{t}+1}{p_{t}}\right)
	\prod_{j=1}^{s}\Gamma\left(\sum_{t=\widehat{k}_{j-1}+1}^{\widehat{k}_{j}}
	\frac{\alpha_{t}+1}{p_{t}}\right)}
	{\sty \prod_{j=1}^{n} \Gamma\left(\frac{\alpha_j+\nu_j-\mu_j+1}{p_j}\right)
	\prod_{j=1}^{s} \Gamma\left(\sum_{t=\widehat{k}_{j-1}+1}^{\widehat{k}_{j}}
	\frac{\alpha_{t}+\nu_{t}+1}{p_{t}}\right)} \gamma_{a,k}(\alpha) \notag
\end{align}

\begin{theorem}\label{thm:commutative-iff}
	Let $k = (k_1, \dots, k_s) \in \Z_+^s$ be a partition of $n$
  	and let $\nu, \mu, \sigma,\eta \in \N^n$
  	be multi-indices that satisfy the following properties
  	\begin{itemize}
  	\item $\nu \perp \mu$ and $\sigma \perp \eta$,
  	\item ${\sty\sum_{t=\widehat{k}_{j-1}+1}^{\widehat{k}_{j}}
		\frac{\nu_{t}-\mu_t} {p_{t}}=0}$ and
		${\sty\sum_{t=\widehat{k}_{j-1}+1}^{\widehat{k}_{j}}
		\frac{\sigma_{t}-\eta_t}{p_{t}}=0}$ for all $j = 0, \dots, s$.
  	\end{itemize}
	Let $a\xi^\nu\overline{\xi}^\mu, b\xi^\sigma\overline{\xi}^\eta \in
  	\mHR_{k}$ be corresponding $k$-quasi-homogeneous
  	quasi-radial symbols on $\Omega^n_p$, where $a, b \in \mR_{k}$ are
  	measurable and bounded symbols. Then, the Toeplitz operators
  	$T_{a\xi^\nu\overline{\xi}^\mu}$ and
  	$T_{b\xi^\sigma\overline{\xi}^\eta}$ commute on the
  	Bergman space $\mA^2(\Omega_p^n)$ if and only
  	if for each $s=1,\ldots,n$ one of
  	the following conditions holds
  	\begin{enumerate}
  	\item $\nu_s=\mu_s=0$
  	\item $\sigma_s=\eta_s=0$
  	\item $\nu_s=\sigma_s=0$
  	\item $\mu_s=\eta_s=0$
  	\end{enumerate}
\end{theorem}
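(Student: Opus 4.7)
The plan is to reduce the commutation of two quasi-homogeneous quasi-radial Toeplitz operators to a clean per-coordinate identity among Gamma functions, and then analyze this identity case by case using the orthogonality hypotheses $\nu\perp\mu$ and $\sigma\perp\eta$.

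First I would apply Lemma~\ref{Lemma-khomogeneous} to each composition. Both $T_{a\xi^\nu\overline{\xi}^\mu}T_{b\xi^\sigma\overline{\xi}^\eta}$ and $T_{b\xi^\sigma\overline{\xi}^\eta}T_{a\xi^\nu\overline{\xi}^\mu}$ send a monomial $z^\alpha$ to a scalar multiple of $z^{\alpha+\nu-\mu+\sigma-\eta}$ (vanishing when any intermediate exponent leaves $\N^n$, a case easily disposed of). Hence commutation is equivalent to the scalar identity
\[
\tilde\gamma_{b,k,\sigma,\eta}(\alpha)\,\tilde\gamma_{a,k,\nu,\mu}(\alpha+\sigma-\eta) = \tilde\gamma_{a,k,\nu,\mu}(\alpha)\,\tilde\gamma_{b,k,\sigma,\eta}(\alpha+\nu-\mu)
\]
for every $\alpha\in\N^n$ in the admissible range.

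The second step is to exploit the second displayed form of $\tilde\gamma$, namely \eqref{spectrum-homoge-2}, which is available precisely because the hypothesis ${\sty\sum_{t\in I_j}\frac{\nu_t-\mu_t}{p_t}=0}$ (and likewise for $\sigma,\eta$) holds. Writing $\tilde\gamma_{a,k,\nu,\mu}(\alpha)=R_{\nu,\mu}(\alpha)\,\gamma_{a,k}(\alpha)$, where $R_{\nu,\mu}(\alpha)$ is a ratio of products of Gamma functions, two crucial simplifications occur. First, $\gamma_{a,k}(\alpha)$ depends on $\alpha$ only through the block sums $s_j(\alpha)=\sum_{t\in I_j}\frac{\alpha_t+1}{p_t}$, and those sums are invariant under the shifts $\alpha\mapsto\alpha+\sigma-\eta$ and $\alpha\mapsto\alpha+\nu-\mu$ by the hypothesis; thus $\gamma_{a,k}(\alpha+\sigma-\eta)=\gamma_{a,k}(\alpha)$ and analogously for $\gamma_{b,k}$. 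Second, for the same reason the block-sum Gamma factors $\prod_j\Gamma(s_j(\alpha+\cdot))$ appearing in $R_{\nu,\mu}$ cancel from both sides of the commutation identity. The dependence on the radial symbols $a,b$ therefore disappears entirely, and the whole question reduces to a product of per-coordinate identities
\[
F_t(\alpha_t) = G_t(\alpha_t),\qquad t=1,\dots,n,
\]
where each $F_t$ and $G_t$ is an explicit quotient of four Gamma functions in $\alpha_t$, built from $\nu_t,\mu_t,\sigma_t,\eta_t,p_t$.

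For the sufficiency direction, I would verify by direct substitution that in each of the four listed cases $(\nu_t,\mu_t)=(0,0)$, $(\sigma_t,\eta_t)=(0,0)$, $(\nu_t,\sigma_t)=(0,0)$, or $(\mu_t,\eta_t)=(0,0)$ the quotient $F_t/G_t$ collapses to $1$; in each case two of the Gamma arguments on the two sides coincide after cancellation, so the identity holds trivially. For the necessity direction, I would use $\nu\perp\mu$ and $\sigma\perp\eta$ componentwise to list all admissible sign patterns of $(\nu_t,\mu_t,\sigma_t,\eta_t)$ at a coordinate $t$ where none of the four conditions holds; these reduce to the two symmetric patterns $(+,0,0,+)$ and $(0,+,+,0)$. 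For each, the identity $F_t=G_t$ becomes a relation of the form $\Gamma(x/p_t)/\Gamma((x-h)/p_t) = \Gamma((x+\ell)/p_t)/\Gamma((x+\ell-h)/p_t)$ for all sufficiently large integer $x$, with $h,\ell>0$. This cannot hold: the Gamma-quotient $\Gamma(y)/\Gamma(y-h/p_t)$ is a strictly monotone non-constant function of $y$, so the identity would force $\ell=0$, contradicting the assumed positivity. The anticipated main obstacle is the bookkeeping needed to make the block-Gamma cancellation transparent and to isolate the pure per-coordinate identity; once that isolation is achieved, the case analysis is mechanical and the converse follows from a standard growth-rate argument on ratios of Gamma functions.
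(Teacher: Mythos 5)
Your proposal is correct and follows essentially the same route as the paper: both compositions are computed via Lemma~\ref{Lemma-khomogeneous} in the simplified form \eqref{spectrum-homoge-2}, the radial integrals and block Gamma factors cancel because the block sums are invariant under the shifts, and commutation reduces to the product of per-coordinate Gamma quotients, whose analysis yields the four conditions. You in fact supply more detail than the paper on the final step (the sign-pattern enumeration and the monotonicity argument for ratios of Gamma functions), which the paper dismisses with ``one can easily check.''
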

\begin{proof}
	First, we observe that $T_{b\xi^\sigma\overline{\xi}^\eta}T_{a\xi^\nu\overline{\xi}^\mu} z^{\alpha}$
  	and $T_{a\xi^\nu\overline{\xi}^\mu}T_{b\xi^\sigma\overline{\xi}^\eta} z^{\alpha}$
	are always simultaneously zero or non zero. Hence, we
	compute such expressions for $\alpha \in N^n$ assuming that both
	are non zero.
	By \eqref{spectrum-homoge-2}, we have the following expression
	\begin{align*}
  		& T_{b\xi^\sigma\overline{\xi}^\eta}T_{a\xi^\nu\overline{\xi}^\mu} z^\alpha = \\
  		& {\sty \int_{\Delta_{m}^{n}}a(r_1,\ldots,r_n)
  		\prod_{j=1}^{s}r_{j}^{\left(\sum_{t=\widehat{k}_{j-1}+1}^{\widehat{k}_{j}}
		\frac{2\alpha_{t}+2}{p_{t}}-1\right)}dr_{j}}  \\
		&\times \frac{\sty 4^{s}\prod_{t=1}^{n}\Gamma\left(\frac{\alpha_{t}+\nu_{t}+1}{p_{t}}\right)
		\prod_{j=1}^{s}\left(\sum_{t=\widehat{k}_{j-1}+1}^{\widehat{k}_{j}}
		\frac{1}{p_{t}}\right) \Gamma\left(\sum_{j=1}^{n}\frac{\alpha_j+1}{p_j}+1\right)}
		{\sty \prod_{j=1}^{n}
		\Gamma\left(\frac{\alpha_j+\nu_j-\mu_j+1}{p_j}\right)
		\prod_{j=1}^{s} \Gamma\left(\sum_{t=\widehat{k}_{j-1}+1}^{\widehat{k}_{j}}
		\frac{\alpha_{t}+\nu_{t}+1}{p_{t}}\right)}  \\
		&\times {\sty \int_{\Delta_{m}^{n}}b(r_1,\ldots,r_n)
		\prod_{j=1}^{s}r_{j}^{\left(\sum_{t=\widehat{k}_{j-1}+1}^{\widehat{k}_{j}}
		\frac{2\alpha_{t}+2}{p_{t}}-1\right)}dr_{j}}  \\
		&\times \frac{\sty 4^{s}\prod_{t=1}^{n}
		\Gamma\left(\frac{\alpha_{t}+\nu_t-\mu_t+\sigma_{t}+1}{p_{t}}\right)
		\prod_{j=1}^{s}\left(\sum_{t=\widehat{k}_{j-1}+1}^{\widehat{k}_{j}}
		\frac{1}{p_{t}}\right) }
		{\sty \prod_{j=1}^{n}
		\Gamma\left(\frac{\alpha_j+\nu_j-\mu_j+\sigma_j-\eta_j+1}{p_j}\right)
		}  \\
		&\times \frac{\sty \Gamma\left(\sum_{j=1}^{n}\frac{\alpha_j+1}{p_j}+1\right)}
		{\sty \prod_{j=1}^{s} \Gamma\left(\sum_{t=\widehat{k}_{j-1}+1}^{\widehat{k}_{j}}
				\frac{\alpha_{t}+\sigma_{t}+1}{p_{t}}\right)}
		\times z^{\alpha+\nu-\mu+\sigma-\eta}
  	\end{align*}
	Similarly, we also have
	\begin{align*}
		& T_{a\xi^\nu\overline{\xi}^\mu}T_{b\xi^\sigma\overline{\xi}^\eta} z^{\alpha} = \\
	  	&{\sty \int_{\Delta_{m}^{n}}b(r_1,\ldots,r_n)
	  	\prod_{j=1}^{s}r_{j}^{\left(\sum_{t=\widehat{k}_{j-1}+1}^{\widehat{k}_{j}}
		\frac{2\alpha_{t}+2}{p_{t}}-1\right)}dr_{j}} \\
		&\times \frac{\sty 4^{s}\prod_{t=1}^{n}\Gamma\left(\frac{\alpha_{t}+\sigma_{t}+1}{p_{t}}\right)
		\prod_{j=1}^{s}\left(\sum_{t=\widehat{k}_{j-1}+1}^{\widehat{k}_{j}}
		\frac{1}{p_{t}}\right) \Gamma\left(\sum_{j=1}^{n}\frac{\alpha_j+1}{p_j}
		  +1\right)}
		{\sty \prod_{j=1}^{n}
		\Gamma\left(\frac{\alpha_j+\sigma_j-\eta_j+1}{p_j}\right)
		\prod_{j=1}^{s} \Gamma\left(\sum_{t=\widehat{k}_{j-1}+1}^{\widehat{k}_{j}}
		\frac{\alpha_{t}+\sigma_{t}+1}{p_{t}}\right)} \\
		&\times {\sty \int_{\Delta_{m}^{n}}a(r_1,\ldots,r_n)
		\prod_{j=1}^{s}r_{j}^{\left(\sum_{t=\widehat{k}_{j-1}+1}^{\widehat{k}_{j}}
		\frac{2\alpha_{t}+2}{p_{t}}-1\right)}dr_{j}}  \\
		&\times \frac{\sty 4^{s}\prod_{t=1}^{n}
		\Gamma\left(\frac{\alpha_{t}+\sigma_t-\eta_t+\nu_{t}+1}{p_{t}}\right)
		\prod_{j=1}^{s}\left(\sum_{t=\widehat{k}_{j-1}+1}^{\widehat{k}_{j}}
		\frac{1}{p_{t}}\right) }
		{\sty \prod_{j=1}^{n}
  		\Gamma\left(\frac{\alpha_j+\sigma_j-\eta_j+\nu_j-\mu_j+1}{p_j}\right)
		} \\
		&\times \frac{\sty \Gamma\left(\sum_{j=1}^{n}\frac{\alpha_j+1}{p_j}
					+1\right)}
					{\sty \prod_{j=1}^{s} \Gamma\left(\sum_{t=\widehat{k}_{j-1}+1}^{\widehat{k}_{j}}
							\frac{\alpha_{t}+\nu_{t}+1}{p_{t}}\right)}
		\times z^{\alpha+\nu-\mu+\sigma-\eta}
  	\end{align*}
	Therefore, we conclude that
	  $T_{a\xi^\nu\overline{\xi}^\mu}T_{b\xi^\sigma\overline{\xi}^\eta} z^{\alpha}=
	   T_{b\xi^\sigma\overline{\xi}^\eta}T_{a\xi^\nu\overline{\xi}^\mu}$
	if and only if
	\[
		\prod_{t=1}^{n}
	  	\frac{\Gamma\left(\frac{\alpha_{t}+\nu_{t}+1}{p_{t}}\right)
		\Gamma\left(\frac{\alpha_{t}+\nu_{t}-\mu_t+\sigma_t+1}{p_{t}}\right)}{\Gamma\left(\frac{\alpha_{t}+\nu_{t}-\mu_t+1}{p_{t}}\right)} =
		\prod_{t=1}^{n}
	  	\frac{\Gamma\left(\frac{\alpha_{t}+\sigma_{t}+1}{p_{t}}\right)\Gamma\left(\frac{\alpha_{t}+\sigma_t-\eta_t+\nu_{t}+1}{p_{t}}\right)}{\Gamma\left(\frac{\alpha_{t}+\sigma_{t}-\eta_t+1}{p_{t}}\right)}.
  	\]
	Finally, one can easily check that the latter identity holds for
  	every $\alpha \in  \N^n $ if and only if the conclusion of the
	statement holds. This proves the Theorem.
\end{proof}

The above implies one of our main results: the construction of
a commutative Banach algebra of Toeplitz operators on the
domain $\Omega_p^n$. Note that the rest of this section
generalizes the results found in \cite{NikolaiQuasi}.

\begin{definition} \label{def:commuting-symbols}
	Let $k \in \Z_+^s$ be a partition of $n$ and $h \in \Z_+^s$
	be such that $1 \leq h_j \leq k_j -1$ for all $j = 1, \dots s$.
	Denote by $\mA_{k,h}(\Omega_p^n)$ the set of symbols $\varphi \in \mHR_k(\Omega_p^n)$ that
	satisfy the following properties.
	\begin{enumerate}
		\item The symbol is of the form
			$\varphi = a \xi^\nu \overline{\xi}^\mu$ where
			$a \in \mR_k(\Omega_p^n)$ and $\nu, \mu \in \N^n$ so that
			$\xi^\nu \overline{\xi}^\mu \in \mH_k(\Omega_p^n)$.
		\item The multi-indices $\nu, \mu$ are orthogonal and
			satisfy
			\[
				\sum_{t=\widehat{k}_{j-1}+1}^{\widehat{k}_{j}}
				\frac{\nu_{t}-\mu_t}{p_{t}} =
				\sum_{t=1}^{k_j}
				\frac{(\nu_{(j)})_t - (\mu_{(j)})_t}{(p_{(j)})_t}
				=0
			\]
			for all $j = 1, \dots, s$.
		\item The multi-indices $\nu, \mu$ satisfy
			\[
				\nu_{\widehat{k}_j + t_1} = (\nu_{(j)})_{t_1} = 0, \quad
				\mu_{\widehat{k}_j + t_2} = (\mu_{(j)})_{t_2} = 0
			\]
			for all $1 \leq t_2 \leq h_j < t_1 \leq k_j$ and $j = 1, \dots, s$.
	\end{enumerate}
\end{definition}

\begin{remark}\label{rmk:lcm-Lambda}
	From now on, for $p$ as before we will denote by $\lcm(p)$ the least
	common multiple of $p_1, \dots, p_n$. Also, let us denote
	\[
		\Lambda(p) = \lcm(p) \left(\frac{1}{p_1}, \dots, \frac{1}{p_n}\right),
	\]
	which belongs to $\Z^n_+$. With these conventions, multiplying by $\lcm(p)$
	shows that condition (2) from Definition~\ref{def:commuting-symbols} is
	equivalent to
	\begin{enumerate}
		\item[2'] The multi-indices $\nu, \mu$ are orthogonal and satisfy
			\[
				\Lambda(p)_{(j)} \cdot (\nu_{(j)} - \mu_{(j)})
				= \sum_{t=\widehat{k}_{j-1}+1}^{\widehat{k}_j}
						\Lambda(p)_t (\nu_t - \mu_t) = 0,
			\]
			for every $j = 1, \dots, s$.
	\end{enumerate}
	On the other hand, we observe that the set of symbols $\mA_{k,h}(\Omega_p^n)$ reduces to those
	considered in \cite{NikolaiQuasi} when $p = (1, \dots, 1)$ for which
	we have $\Omega_p^n = \B^n$. 	
\end{remark}

As an immediate consequence of Definition~\ref{def:commuting-symbols}
and Theorem~\ref{thm:commutative-iff} we obtain the following result.

\begin{theorem}[Commutative Banach algebra in $\mA^2(\Omega_p^n)$]
	Let $k \in \Z_+^s$ be a partition of $n$ and $h \in \Z_+^s$
	be such that $1 \leq h_j \leq k_j -1$ for all $j = 1, \dots s$.
	Then, the Banach algebra of Toeplitz operators generated by the
	symbols in $\mA_{k,h}(\Omega_p^n)$ is commutative in the weighted Bergman
	space $\mA^2(\Omega_p^n)$.
\end{theorem}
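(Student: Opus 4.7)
The plan is to reduce the statement to Theorem~\ref{thm:commutative-iff} by verifying that any two symbols in $\mA_{k,h}(\Omega_p^n)$ already satisfy, coordinate by coordinate, the combinatorial criterion proved there. Once pairwise commutativity of the generating Toeplitz operators is established, commutativity of the Banach algebra they generate follows automatically: linear combinations, products, and operator-norm limits of a set of pairwise commuting bounded operators produce a commutative subalgebra of $\mathcal{B}(\mA^2(\Omega_p^n))$.

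More concretely, I would take two arbitrary symbols $\varphi_1 = a\xi^\nu\overline{\xi}^\mu$ and $\varphi_2 = b\xi^\sigma\overline{\xi}^\eta$ in $\mA_{k,h}(\Omega_p^n)$. Condition (1) of Definition~\ref{def:commuting-symbols} packages them as $k$-quasi-homogeneous quasi-radial symbols, while condition (2) gives the orthogonality $\nu\perp\mu$, $\sigma\perp\eta$ together with the block-wise vanishing sums that are exactly the hypotheses of Theorem~\ref{thm:commutative-iff}. Hence only the last step of Theorem~\ref{thm:commutative-iff}, the verification of one of the four alternatives at every coordinate, needs to be checked.

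Fix a coordinate $t \in \{1,\dots,n\}$ and let $j$ be the unique index with $\widehat{k}_{j-1} < t \leq \widehat{k}_j$, so that $t = \widehat{k}_{j-1} + r$ for some $1 \leq r \leq k_j$. Condition (3) of Definition~\ref{def:commuting-symbols}, applied to both $(\nu,\mu)$ and $(\sigma,\eta)$, forces a sharp split within the $j$-th block:
\begin{itemize}
\item if $r \leq h_j$, then $\mu_t = \eta_t = 0$, so alternative (4) of Theorem~\ref{thm:commutative-iff} holds;
\item if $r > h_j$, then $\nu_t = \sigma_t = 0$, so alternative (3) of Theorem~\ref{thm:commutative-iff} holds.
\end{itemize}
In either case one of the four alternatives is satisfied, so Theorem~\ref{thm:commutative-iff} gives $T_{\varphi_1}T_{\varphi_2} = T_{\varphi_2}T_{\varphi_1}$.

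There is no real obstacle here: Definition~\ref{def:commuting-symbols} was designed precisely so that, inside each block $j$, the first $h_j$ coordinates carry only $\nu$ and $\sigma$ indices while the remaining $k_j - h_j$ coordinates carry only $\mu$ and $\eta$ indices. This disjoint support pattern is exactly what the coordinate-wise disjunction (3) or (4) of Theorem~\ref{thm:commutative-iff} demands, so the argument is purely bookkeeping and the theorem is indeed immediate from the definition together with Theorem~\ref{thm:commutative-iff}.
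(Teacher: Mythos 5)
Your proposal is correct and follows exactly the route the paper intends: the paper states this theorem as an immediate consequence of Definition~\ref{def:commuting-symbols} and Theorem~\ref{thm:commutative-iff}, and your coordinate-by-coordinate check (condition (3) forcing $\mu_t=\eta_t=0$ on the first $h_j$ slots of each block and $\nu_t=\sigma_t=0$ on the rest, hence alternative (4) or (3) respectively) is precisely the bookkeeping being left to the reader. The remark that pairwise commutativity of generators passes to linear combinations, products, and norm limits correctly completes the step from generators to the full Banach algebra.
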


\section{Bundles of Lagrangian frames and quasi-homogeneous symbols}
We will provide a geometric construction associated to the set
of symbols $\mA_{k,h}(\Omega_p^n)$ introduced in the previous section.
In particular, we will consider fixed $k \in \Z_+^s$ a partition of $n$
and $h \in \Z_+^s$ satisfying the conditions of
Definition~\ref{def:commuting-symbols}. We will also use the notation
introduced in Remark~\ref{rmk:lcm-Lambda} as well as the equivalence
between condition (2') from such remark and condition (2) from
Definition~\ref{def:commuting-symbols}.

We recall that $\Omega_p^n$ is a Reinhardt domain and so the action
\begin{align*}
	\T^n \times \Omega_p^n &\rightarrow \Omega_p^n \\
	(\tau, z) &\mapsto (\tau_1 z_1, \dots, \tau_n z_n)
\end{align*}
is holomorphic. Hence, the action is isometric for the K\"ahler
metric defined by the Bergman kernel of $\Omega_p^n$.

As noted in \cite{QS-Quasi-PC}, the known commutative $C^*$-algebras
of Toeplitz operators have an associated Abelian group of isometries.
We refer to \cite{GQV-disk}, \cite{QS-Proj}, \cite{QV-Ball1} and
\cite{QV-Ball2} for a detailed discussion of such behavior. We have
shown that this property extends to quasi-homogeneous symbols
on the complex projective space $\mathbb{P}^n(\C)$ in a suitable
fashion. To describe the corresponding behavior for $\Omega_p^n$
we consider the following homomorphism built from our
current information.
\begin{align*}
	\pi_p : \T^n &\rightarrow \T^n \\
		t &\mapsto t^{\Lambda(p)} = (t_1^{\Lambda(p)_1},
			\dots, t_n^{\Lambda(p)_n}).
\end{align*}
We observe that $\pi_p$ is a surjective homomorphism of Lie groups.
Moreover, $\pi_p$ is a local diffeomorphism.
This homomorphism allows us to give a simpler description of the
group associated to the symbols that we have considered.

\begin{theorem}[Abelian group associated to $\mA_{k,h}(\Omega_p^n)$]
	\label{thm:assoc-abelian-group}
	Let $k \in \Z_+^s$ be a partition of $n$ and $h \in \Z_+^s$
	be such that the conditions of
	Definition~\ref{def:commuting-symbols} are satisfied. Let $\mT_k(p)$
	be the maximal connected subgroup of $\T^n$ that satisfies
	\begin{itemize}
		\item $\tau \in \mT_k(p)$ if and only if $\varphi(\tau z)
		= \varphi(z)$ for every $\varphi \in \mA_{k,h}(\Omega_p^n)$
		and $z \in \Omega_p^n$.
	\end{itemize}
	Then, $\mT_k(p) = \pi_p(\T^s_k)$, where $\T^s_k$ is the subgroup of
	$\T^n$ given by
	\[
		\T^s_k = \{\tau \in \T^n : \exists \, \omega \in \T^s
		\mbox{ such that } 	\tau_{(j)} = (\omega_j, \dots, \omega_j)
		\;
		\forall\, j = 1, \dots, s	\}.
	\]
	In particular, $\mT_k(p)$ is a closed subgroup of $\T^n$ isomorphic
	to $\T^s$.
\end{theorem}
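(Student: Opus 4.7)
The plan is to reduce the invariance condition to a family of character equations on $\T^n$ and then identify the connected subgroup they cut out by working at the Lie algebra level. First, since each block radius $r_j$ depends only on $|z_t|^{2p_t}$, it is fixed by every toral action $z \mapsto \tau z$. Hence the quasi-radial factor $a(r_1,\ldots,r_s)$ of any symbol in $\mA_{k,h}(\Omega_p^n)$ is automatically $\T^n$-invariant, while $\xi_t \mapsto \tau_t \xi_t$. A direct computation then gives
\[
	\varphi(\tau z) = \tau^{\nu-\mu}\,\varphi(z)
\]
for $\varphi = a\,\xi^\nu\overline{\xi}^\mu \in \mA_{k,h}(\Omega_p^n)$ (using $\overline{\tau}^\mu = \tau^{-\mu}$), so $\tau$ fixes every such symbol if and only if $\tau^{\nu-\mu}=1$ for every admissible pair $(\nu,\mu)$ from Definition~\ref{def:commuting-symbols}.

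For the inclusion $\pi_p(\T^s_k) \subseteq \mT_k(p)$: given $t \in \T^s_k$ determined by $\omega \in \T^s$, set $\tau = \pi_p(t)$, so that $\tau_l = \omega_j^{\Lambda(p)_l}$ whenever $l$ lies in block $j$. The character computation factors as
\[
	\tau^{\nu-\mu} = \prod_{j=1}^s \omega_j^{\Lambda(p)_{(j)} \cdot (\nu_{(j)} - \mu_{(j)})} = 1,
\]
by condition (2$'$) of Remark~\ref{rmk:lcm-Lambda}. For the reverse containment at the level of identity components, I would pass to the Lie algebra $i\R^n$ of $\T^n$. The Lie algebra of $\mT_k(p)$ is then the annihilator of the $\R$-linear span $W$ of $\{\nu-\mu : (\nu,\mu)\ \text{admissible}\}$. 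Because conditions (2$'$) and (3) of Definition~\ref{def:commuting-symbols} decouple across the $s$ blocks, we have $W = \bigoplus_{j=1}^s W_j$ with $W_j \subseteq \R^{k_j}$ spanned by differences $\nu_{(j)}-\mu_{(j)}$ in which $\nu_{(j)}$ is supported on the first $h_j$ indices, $\mu_{(j)}$ on the last $k_j-h_j$ indices, and $\Lambda(p)_{(j)}\cdot(\nu_{(j)}-\mu_{(j)}) = 0$.

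The key technical point is the equality $W_j = \Lambda(p)_{(j)}^\perp$. The inclusion $\subseteq$ is immediate; for the reverse, I would exhibit the explicit admissible differences
\[
	v_{tu} = \Lambda(p)_{(j),u}\,e_t - \Lambda(p)_{(j),t}\,e_u, \qquad 1\le t\le h_j<u\le k_j,
\]
obtained by taking $\nu = \Lambda(p)_{(j),u}e_t$ and $\mu = \Lambda(p)_{(j),t}e_u$. Rational combinations of $v_{tu}$ with $v_{t'u}$ (resp.\ $v_{tu}$ with $v_{tu'}$) produce vectors of the form $\Lambda(p)_{(j),t}e_{t'} - \Lambda(p)_{(j),t'}e_t$ supported inside the first $h_j$ indices (resp.\ analogous vectors supported in the last $k_j-h_j$ indices). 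A dimension count $(h_j-1) + (k_j - h_j - 1) + 1 = k_j - 1 = \dim \Lambda(p)_{(j)}^\perp$ then forces equality.

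With $W_j = \Lambda(p)_{(j)}^\perp$ in hand, the Lie algebra of $\mT_k(p)$ equals $\bigoplus_{j=1}^s \R\,\Lambda(p)_{(j)} \subseteq \R^n$. On the other hand, $\Lie(\T^s_k)$ is the block-constant subspace $\{(x_1,\dots,x_1,\dots,x_s,\dots,x_s) : x_j \in \R\}$, and $d\pi_p$ acts by componentwise multiplication by $\Lambda(p)$, giving $d\pi_p(\Lie\T^s_k) = \bigoplus_j \R\,\Lambda(p)_{(j)}$. Since both $\mT_k(p)$ and $\pi_p(\T^s_k)$ are connected (the latter as the continuous image of a torus), matching Lie algebras force equality of the two groups. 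Finally, $\mT_k(p)$ is a connected compact abelian Lie group of dimension $s$, hence isomorphic to $\T^s$. I expect the main obstacle to be the explicit verification that the admissible differences $\nu_{(j)}-\mu_{(j)}$ already span the full hyperplane $\Lambda(p)_{(j)}^\perp$, since this is the only place where one must genuinely exploit the support condition (3); the remainder is a routine transfer of information between the group and its Lie algebra.
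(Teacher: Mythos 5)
Your proof is correct and follows essentially the same route as the paper: reduce invariance to the character equations $\tau^{\nu-\mu}=1$, obtain $\pi_p(\T^s_k)\subseteq\mT_k(p)$ from condition (2$'$), and get the reverse inclusion by testing against explicit monomial symbols with $\nu$ supported at a single index $t\le h_j$ and $\mu$ at a single index $u>h_j$, finishing with a dimension count. The only cosmetic difference is that you run the dimension count at the Lie-algebra level (showing the admissible differences span $\Lambda(p)_{(j)}^{\perp}$ in each block), whereas the paper argues at the group level via the power map $\tau\mapsto\tau^{\lcm(p)}$.
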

\begin{proof}
	Since $\pi_p$ is surjective, we can consider the elements of $\T^n$
	to be given in the form $\pi_p(\tau)$ for $\tau \in \T^n$.

	Let $\varphi \in \mA_{k,h}(\Omega_p^n)$ be given as in
	Definition~\ref{def:commuting-symbols}, in the form
	$\varphi = a \xi^\nu \overline{\xi}^\mu$. In coordinates this can
	be written as
	\[
		\varphi(z) = \\ a(|z_{(1)}|, \dots, |z_{(s)}|)
		\prod_{j=1}^{s} \left(\frac{z_{(j)}}{|z_{(j)}|}\right)^{\nu_{(j)}}
				\left(\frac{\overline{z}_{(j)}}{|z_{(j)}|}\right)^{\mu_{(j)}},
	\]
	which yields for every $\pi_p(\tau) \in \T^n$
	\begin{multline*}
		\varphi(\pi_p(\tau)z) = \\
		a(|z_{(1)}|, \dots, |z_{(s)}|)
		\prod_{j=1}^{s} \pi_p(\tau)_{(j)}^{\nu_{(j)}} \overline{\pi_p(\tau)}_{(j)}^{\mu_{(j)}}
		\prod_{j=1}^{s} \left(\frac{z_{(j)}}{|z_{(j)}|}\right)^{\nu_{(j)}}
				\left(\frac{\overline{z}_{(j)}}{|z_{(j)}|}\right)^{\mu_{(j)}}.
	\end{multline*}
	We conclude that $\pi_p(\tau) \in \mT_k(p)$ if and only if
	\begin{equation} \label{eq:mTkp-prod-condition}
		\prod_{j=1}^{s} \left(\tau_{(j)}^{\Lambda(p)_{(j)}}\right)^{{\nu_{(j)}} - \mu_{(j)}}
		=
		\prod_{j=1}^{s} \pi_p(\tau)_{(j)}^{\nu_{(j)}} \overline{\pi_p(\tau)}_{(j)}^{\mu_{(j)}}
		= 1,
	\end{equation}
	for every $\nu, \mu \in \Z_+^n$ that satisfies conditions (2) and (3)
	from Definition~\ref{def:commuting-symbols}. We have used the fact that
	$\pi_p(\tau)_{(j)} = \tau_{(j)}^{\Lambda(p)_{(j)}}$ for every $j = 1, \dots, s$.
	
	For $\tau \in \T^s_k$, let $\omega \in \T^s$ such that $\tau_{(j)} = (\omega_j,
	\dots, \omega_j)$ for all $j$. Then, we have
	\[
		\left(\tau_{(j)}^{\Lambda(p)_{(j)}}\right)^{{\nu_{(j)}} - \mu_{(j)}} =
		\omega^{\Lambda(p)_{(j)}\cdot(\nu_{(j)} - \mu_{(j)})} = 1
	\]
	which vanishes for all $j$ since condition (2') holds. Since $\T^s_k$ is
	connected, this shows that $\pi_p(\T^s_k) \subset \mT_k(p)$.
	By the connectedness of $\mT_k(p)$ and since $\pi_p$ is a local diffeomorphism,
	to conclude that $\pi_p(\T^s_k) = \mT_k(p)$ it is enough to show that
	$\dim(\mT_k(p)) = s$. Notice that the inclusion $\pi_p(\T^s_k) = \mT_k(p)$
	already shows that $s \leq \dim(\mT_k(p))$.

	Let $\pi_p(\tau) \in \mT_k(p)$ be given, and choose $j_0 \in \{1,\dots,s\}$
	and $t_1, t_2 \in \Z^+$ such that $1 \leq t_1 \leq h_{j_0} < t_2 \leq k_{j_0}$.
	For such data let us define
	\begin{align*}
	(\nu_{(j)})_t &=
	\begin{cases}
		(p_{(j_0)})_{t_1} &\mbox{if } j=j_0, t=t_1 \\
		0 &\mbox{otherwise}
	\end{cases}, \\
	(\mu_{(j)})_t &=
	\begin{cases}
		(p_{(j_0)})_{t_2} &\mbox{if } j=j_0, t=t_2 \\
		0 &\mbox{otherwise}
	\end{cases}.
	\end{align*}
	Then, it is easily seen that $\nu, \mu$ satisfy the conditions from
	Definition~\ref{def:commuting-symbols}. This implies that \eqref{eq:mTkp-prod-condition}
	is satisfied for our current $\tau$ and such multi-indices $\nu, \mu$.
	Moreover, \eqref{eq:mTkp-prod-condition} reduces to the identity
	\begin{multline*}
		(\tau_{(j_0)})_{t_1}^{\lcm(p)} (\tau_{(j_0)})_{t_2}^{-\lcm(p)} = \\
		(\tau_{(j_0)})_{t_1}^{(\Lambda(p)_{(j_0)})_{t_1}(p_{(j_0)})_{t_1}}
			(\tau_{(j_0)})_{t_2}^{-(\Lambda(p)_{(j_0)})_{t_2}(p_{(j_0)})_{t_2}} = 1.
	\end{multline*}
	Hence, given the choices of $j_0, t_1, t_2$, we conclude that there exists $\omega \in \T^s$
	such that
	\[
		(\tau_{(j)})_{t}^{\lcm(p)} = \omega_j
	\]
	for every $j = 1, \dots, s$ and $t \in \{1, \dots, k_j \}$. If we consider the
	homomorphism
	\begin{align*}
		\rho : \T^n &\rightarrow \T^n \\
			\tau &\mapsto (\tau_1^{\lcm(p)}, \dots, \tau_n^{\lcm(p)}),
	\end{align*}	
	then we have just proved that $\rho(\mT_k(p)) \subset \T^s_k$. We note that
	$\rho$ is a local diffeomorphism and so we conclude that $\dim(\mT_k(p)) \leq s$.
	This proves that $\dim(\mT_k(p)) = s$ and, as remarked above, it shows that
	$\mT_k(p) = \pi_p(\T^s_k)$.
	
	For the last claim, observe that since $\T^s_k \simeq \T^s$ the group
	$\mT_k(p) = \pi_p(\T^s_k)$ is compact and thus closed in $\T^n$. Furthermore,
	the fact that $\pi_p$ is a local isomorphism implies that $\mT_k(p) \simeq
	\T^s_k \simeq \T^s$ since all the groups involved are compact and Abelian.
\end{proof}

\begin{remark} \label{rmk:unit-ball}
	We note that the group $\mT_k(p)$ does not depend on $h$. For $p_0
	= (1, \dots, 1)$ the domain under consideration is the unit ball
	$\Omega^n_{(1, \dots, 1)} = \B^n$, and in this case $\Lambda(p_0) =
	(1, \dots, 1)$ as well. Hence, $\pi_{p_0} = id_{\T^n}$ is the identity
	homomorphism of $\T^n$ and so $\mT_k(p_0) = \T^s_k$.
\end{remark}

Following the references mentioned above, we
now proceed to construct Lagrangian frames associated to our
symbols in $\mA_{k,h}(\Omega_p^n)$ by using the torus $\mT_k(p)$.

First, we note that the homomorphism $\pi_p$ defined above can be
more generally considered as the homomorphism of $\C^{*n}$ with the
same expression
\begin{align*}
	\pi_p : \C^{*n} &\rightarrow \C^{*n} \\
	\zeta &\mapsto (\zeta_1^{\Lambda(p)_1},
	\dots, \zeta_n^{\Lambda(p)_n}).
\end{align*}
We thus introduce the complexification of $\mT_k(p)$ as given
by
\[
	\mT_k^\C(p) = \pi_p(\C^{*s}_k),
\]
for the following subgroup of $\C^{*n}$
\[
	\C^{*s}_k = \{\zeta \in \C^{*n} : \exists \, \omega \in \C^{*s}
			\mbox{ such that } 	\zeta_{(j)} = (\omega_j, \dots, \omega_j) \;
			\forall \, j = 1, \dots, s	\}.
\]
For the corresponding construction in the case of the projective
space we considered an action of the complexified group. However,
in this case, we have only a local action of $\mT_k^\C(p)$ on
$\Omega_p^n$ given by the expression
\begin{equation} \label{eq:local-action}
	(\pi_p(\zeta),z) \mapsto \pi_p(\zeta) z
	= (\zeta_1^{\Lambda(p)_1} z_1, \dots, \zeta_n^{\Lambda(p)_n} z_n).
\end{equation}
where $(\pi_p(\zeta),z) \in \mT_k^\C(p) \times \Omega_p^n$ is restricted
to those pairs such that $\pi_p(\zeta) z \in \Omega_p^n$. More precisely,
we have the following easy to prove result. It also shows that the
local action of $\mT_k^{\C}(p)$ extends the action of $\mT_k(p)$.

\begin{lemma} \label{lem:local-action-domain}
	With the above notation, the subset of pairs $(\pi_p(\zeta),z) \in
	\mT_k^\C(p) \times \Omega_p^n$ such that $\pi_p(\zeta) z \in \Omega_p^n$
	is an open subset of $\mT_k^\C(p) \times \Omega_p^n$. Such open set contains
	\[
		\pi_p(\B_0 \times \dots \times \B_0) \times \Omega_p^n,
	\]
	where the first factor is the image under
	$\pi_p$ of the product of $n$ copies of $\B_0$, the punctured
	disc $\B \setminus\{0\}$ in $\C$. In particular, the local action
	is defined on $\mT_k(p) \times \Omega_p^n$ where it is given by the
	action of $\mT_k(p)$ on $\Omega_p^n$ considered above.
\end{lemma}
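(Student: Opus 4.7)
The lemma has three parts: (i) openness of the valid-pair set, (ii) the explicit containment, and (iii) compatibility of the local action with the previously defined $\mT_k(p)$-action. My plan is to attack each by a direct computation with the explicit formula \eqref{eq:local-action}, using the arithmetic identity $\Lambda(p)_j\, p_j = \lcm(p)$ that drops out of Remark~\ref{rmk:lcm-Lambda}.

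For (i) I would introduce the continuous evaluation map
\[
A : \mT_k^\C(p) \times \Omega_p^n \to \C^n, \qquad (\pi_p(\zeta), z) \longmapsto \bigl(\zeta_1^{\Lambda(p)_1} z_1, \dots, \zeta_n^{\Lambda(p)_n} z_n\bigr),
\]
and observe that the valid-pair set is precisely $A^{-1}(\Omega_p^n)$, which is open in $\mT_k^\C(p) \times \Omega_p^n$ because $\Omega_p^n$ is open in $\C^n$ and $A$ is continuous (indeed holomorphic in both variables).

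For (ii), I would take any $\pi_p(\zeta)$ arising from $\zeta$ with every component in the punctured disc $\B_0$ (so $0 < |\zeta_j| < 1$ for each $j$) and any $z \in \Omega_p^n$, and estimate
\[
\sum_{j=1}^n \bigl|\zeta_j^{\Lambda(p)_j} z_j\bigr|^{2 p_j}
= \sum_{j=1}^n |\zeta_j|^{2\, \Lambda(p)_j p_j}\, |z_j|^{2 p_j}
= \sum_{j=1}^n |\zeta_j|^{2\, \lcm(p)}\, |z_j|^{2 p_j}
< \sum_{j=1}^n |z_j|^{2 p_j} < 1,
\]
so that $\pi_p(\zeta) z \in \Omega_p^n$. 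Since the bound is componentwise, the conclusion is insensitive to whether one intersects the polydisc with $\C^{*s}_k$ first to ensure landing inside $\mT_k^\C(p)$; either reading gives the desired inclusion.

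Finally, for (iii), every $\pi_p(\tau) \in \mT_k(p)$ comes from some $\tau \in \T^s_k \subset \T^n$, so $|\tau_j|=1$ for all $j$, and the same calculation reduces to $\sum_j |\pi_p(\tau)_j z_j|^{2 p_j} = \sum_j |z_j|^{2p_j}$; thus $\pi_p(\tau)$ preserves $\Omega_p^n$ globally, and the local action specialises on $\mT_k(p) \times \Omega_p^n$ to the restriction of the Reinhardt $\T^n$-action introduced at the beginning of Section 5. I do not expect a real obstacle: the lemma is essentially an unpacking of the definition of $\pi_p$, and the only point requiring attention is tracking the identity $\Lambda(p)_j p_j = \lcm(p)$, which is what converts the contraction $|\zeta_j| < 1$ into contraction of the defining sum of $\Omega_p^n$.
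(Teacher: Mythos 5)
Your proof is correct and is exactly the computation the paper has in mind: the paper in fact omits the proof entirely (the lemma is introduced as an ``easy to prove result''), and your three steps --- continuity of the evaluation map for openness, the identity $\Lambda(p)_j\,p_j=\lcm(p)$ to turn $|\zeta_j|<1$ into $\sum_j|\zeta_j^{\Lambda(p)_j}z_j|^{2p_j}\le\sum_j|z_j|^{2p_j}<1$, and $|\tau_j|=1$ for the torus case --- are the intended unpacking of the definitions. You also rightly flag the minor imprecision in the statement about whether the polydisc should first be intersected with $\C^{*s}_k$; no gap.
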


We observe that the local action of $\mT_k^{\C}(p)$ is
locally holomorphic in the sense that for $\zeta \in \mT_k^{\C}(p)$ fixed
the map defined by \eqref{eq:local-action} is locally
holomorphic.

To perform our geometric constructions it is better to work with
a global action. Hence, we consider the corresponding action
of $\mT_k^\C(p)$ on $\C^n$ given by the same expression
\begin{align*}
	\mT_k^\C(p) \times \C^n &\rightarrow \C^n \\
	(\pi_p(\zeta),z) &\mapsto \pi_p(\zeta) z
	= (\zeta_1^{\Lambda(p)_1} z_1, \dots, \zeta_n^{\Lambda(p)_n} z_n).
\end{align*}

It is easy to see that the orbits for the action of $\mT_k^{\C}(p)$ on
$\C^n$ have varying dimensions, so we introduce the following
stratification that collects (some) orbits with the same dimension.
For every $j = 1, \dots, s$ we let
\[
	V_j = \{ z \in \C^n : z_{(1)} = 0, \dots, z_{(j-1)} = 0,
	\text{ and } z_{(j)} \not= 0, \dots, z_{(s)} \not= 0 \}.
\]
It is easy to see that, for every $j$, the set $V_j$ is a complex
submanifold of $\C^n$ that is invariant under the action of $\mT_k^{\C}(p)$.
Recall that an action is free when its stabilizers are trivial.
One can consider free actions by the introduction of the
following subgroups of $\mT_k^{\C}(p)$. We let for every $j = 1, \dots, s$
\[
	G_j = \pi_p(\{ \zeta \in \C^{*s}_k : \zeta_t = 1, \mbox{ for all }
	j =1, \dots, \widehat{k}_{j-1}
	\}),
\]
which defines a descending collection of groups with
$G_1 = \mT_k^\C(p)$. Also note that $G_j \simeq \C^{*(s+1-j)}$ for all $j$.
Consider the subgroups of $\mT_k^{\C}(p)$ given by
\[
	\mT_j = G_j \cap \mT_k(p),
\]
for every $j = 1, \dots, s$. Hence, the subgroups
$\mT_j$ form a descending collection of compact subgroups of $\mT_k(p)$
such that $\mT_1 = \mT_k(p)$ and satisfying $\mT_j \simeq \T^{s+1-j}$.

The following result is an easy consequence of the
definitions and the fact that the equations
$z_{(1)} = 0, \dots, z_{(j-1)} = 0$
define closed complex submanifolds of $\C^n$. Note that the
set $V_1 \cup \dots \cup V_s$ is an open conull dense subset
of $\C^n$.

\begin{lemma} \label{lem:stratification}
	For the partition $V_1 \cup \dots \cup V_s
	\subset \C^n$ and the groups $G_1, \dots, G_s$
	defined above, the following properties hold
	for every $j = 1, \dots, s$.
	\begin{enumerate}
		\item The subset $V_j \cup \dots \cup V_s$ is
			a closed complex submanifold of $\C^n$.
		\item The submanifold $V_j$ is open in
			$V_j \cup \dots \cup V_s$.
		\item The action of the group $G_j$ on
			$\C^n$ leaves invariant $V_j \cup
			\dots \cup V_s$.
		\item The submanifold $V_j$ is the largest
			subset of $V_j \cup \dots \cup V_s$ where
			the local action of $G_j$ is free.
	\end{enumerate}
\end{lemma}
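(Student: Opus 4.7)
The plan is to verify each of the four assertions directly from the defining conditions of the $V_j$ and the explicit form of the $\mT_k^\C(p)$-action on $\C^n$.

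For (1) and (2), observe that the union $V_j \cup \dots \cup V_s$ is contained in the closed linear subspace
\[
	L_j := \{z \in \C^n : z_{(1)} = \dots = z_{(j-1)} = 0\},
\]
which, being cut out by the vanishing of a collection of complex coordinates, is a closed complex submanifold of $\C^n$. This yields (1) once the submanifold structure on the union is taken to be induced from $L_j$. Item (2) then follows at once: inside $L_j$ the set $V_j$ is defined by the open conditions $z_{(j)} \neq 0, \dots, z_{(s)} \neq 0$, so $V_j$ is open in $L_j$, and hence also in $V_j \cup \dots \cup V_s$.

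For (3), I would compute using the explicit action $\pi_p(\zeta) \cdot z = (\zeta_t^{\Lambda(p)_t} z_t)_{t=1}^n$. Writing an element of $G_j$ as $\pi_p(\zeta)$ with $\zeta \in \C^{*s}_k$ and $\omega_1 = \dots = \omega_{j-1} = 1$, the first $\widehat{k}_{j-1}$ entries of $\pi_p(\zeta)$ equal $1$, so the action fixes every coordinate with index in $\{1,\dots,\widehat{k}_{j-1}\}$ and scales each subsequent block of coordinates by a common nonzero factor. In particular, both the vanishing conditions $z_{(1)} = \dots = z_{(j-1)} = 0$ and the non-vanishing conditions $z_{(i)} \neq 0$ are preserved, so $G_j$ leaves each $V_i$ with $i\geq j$ invariant, and therefore also their union.

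The main content is item (4). I would parametrize $G_j \simeq \C^{*(s-j+1)}$ by coordinates $(\omega_j, \dots, \omega_s)$, so that $\omega_i$ acts on the $i$-th block by $z_t \mapsto \omega_i^{\Lambda(p)_t} z_t$ for every $t$ with $\widehat{k}_{i-1} < t \leq \widehat{k}_i$. At a point $z \in V_j$ we have $z_{(i)} \neq 0$ for every $i \geq j$, so each one-parameter subgroup in the $\omega_i$-direction moves at least one nonzero coordinate; an elementary computation of the infinitesimal stabilizer shows that it is trivial, so the local action of $G_j$ on $V_j$ is free. Conversely, at a point $z \in L_j$ with $z_{(i_0)} = 0$ for some $i_0 \geq j$, the entire one-parameter subgroup associated with $\omega_{i_0}$ fixes $z$, so the local action cannot be free; thus $V_j$ is the largest such subset. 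The main obstacle will be matching the phrase \emph{closed complex submanifold of $\C^n$} in (1) with the precise geometry, since $V_j \cup \dots \cup V_s$ is in fact only locally closed in $\C^n$ with ambient closure $L_j$; once this reading is fixed, the four assertions reduce to direct computations using the diagonal form of the action.
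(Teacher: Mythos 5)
Your route---direct verification from the definitions and from the diagonal form of the action---is the one the paper intends: its own ``proof'' is a single sentence asserting that the lemma is an easy consequence of the fact that the equations $z_{(1)}=\dots=z_{(j-1)}=0$ cut out closed complex submanifolds. Your items (2) and (3), and the ``largest subset'' half of (4), are correct and already more detailed than the source. Two steps, however, do not close. For (1), your fallback reading of the union as a locally closed submanifold with closure $L_j=\{z:z_{(1)}=\dots=z_{(j-1)}=0\}$ is still too generous once $s-j\geq 2$: the union omits the points of $L_j$ with, say, $z_{(j)}\neq 0$ but $z_{(j+1)}=0$, and such points accumulate on $V_{j+2}$ (perturb the $j$-th block of a point of $V_{j+2}$ away from zero while keeping the $(j+1)$-st block zero). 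Hence $V_j\cup\dots\cup V_s$ is not open in its closure $L_j$, so it is not locally closed---let alone closed---in $\C^n$; it is only a disjoint union of locally closed complex submanifolds of varying dimensions. The statement that is actually true, and that the later arguments use, is that $\overline{V_j}=L_j$ is a closed complex submanifold in which $V_j$ is open and dense.

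For (4), you pass from ``the infinitesimal stabilizer is trivial'' to ``the local action is free.'' That only yields local freeness (discrete isotropy); the actual isotropy groups can be nontrivial. Take $n=2$, $s=1$, $k=(2)$, $p=(1,2)$, so that $\lcm(p)=2$, $\Lambda(p)=(2,1)$ and $G_1=\{(\omega^{2},\omega):\omega\in\C^{*}\}$. The point $z=(1,0)$ lies in $V_1$, since the single block $z_{(1)}=z$ is nonzero, yet the nontrivial element $(1,-1)$ (take $\omega=-1$) fixes it. The condition $z_{(i)}\neq 0$ guarantees only one nonvanishing coordinate per block, and when $\Lambda(p)$ is not constant on a block a root of unity can kill the exponent on the visible coordinate while acting nontrivially on the vanishing ones. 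So what your computation proves---and what is true---is that $V_j$ is exactly the locus where the isotropy algebras vanish, i.e.\ where the action is locally free; literal freeness fails on $V_j$ in general, which also affects the principal-bundle conclusion of Theorem~\ref{thm:assoc-principal-bundles} that rests on this lemma. Both defects originate in the statement rather than in your strategy, but as written your argument establishes neither (1) nor the freeness assertion in (4).
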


Next we obtain a collection of fiber bundles whose total
spaces are the submanifolds $V_j$ and whose structure groups
are the corresponding $G_j$. We refer to \cite{KNI} for more
details on the notions related to fiber bundles. The following
result thus provides the first geometric structure associated
to the symbols in $\mA_{k,h}(\Omega_p^n)$.

\begin{theorem}[Principal bundles associated to $\mA_{k,h}(\Omega_p^n)$]
	\label{thm:assoc-principal-bundles}
	Let $k \in \Z_+^s$ be a partition of $n$ and $h \in \Z_+^s$
	be such that the conditions of
	Definition~\ref{def:commuting-symbols} are satisfied. Consider the
	submanifolds $V_1, \dots, V_s$ of $\C^n$ and the subgroups
	$G_1, \dots, G_s$ of $\mT_k^{\C}(p)$ defined above. Then, the
	following property is satisfied for every $j = 1, \dots, s$.
	\begin{itemize}
		\item The quotient space $G_j\backslash V_j$ is a complex
			manifold so that the natural quotient map
			$V_j \rightarrow G_j \backslash V_j$ is a complex fiber
			bundle with structure group $G_j \simeq \C^{*(s+1-j)}$.
			In particular, every $G_j$-orbit is a complex submanifold
			of $\C^n$.		
	\end{itemize}
\end{theorem}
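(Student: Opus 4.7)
The plan is to realize $V_j \to G_j\backslash V_j$ as a holomorphic principal $G_j$-bundle by producing an explicit atlas of local trivializations. Holomorphicity of the $G_j$-action on $V_j$ is immediate from \eqref{eq:local-action}, whose coordinates are jointly monomial in the group parameters and in the components of $z$; by parts (3) and (4) of Lemma~\ref{lem:stratification}, this action preserves $V_j$ and is free there.

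For each tuple $t^* = (t_j^*, \dots, t_s^*)$ with $\widehat{k}_{l-1} < t_l^* \leq \widehat{k}_l$ for every $l$, I would consider the open subset
\[
U_{t^*} = \{\, z \in V_j : z_{t_l^*} \neq 0 \text{ for every } l = j, \dots, s \,\}
\]
together with the holomorphic slice
\[
S_{t^*} = \{\, z \in U_{t^*} : z_{t_l^*} = 1 \text{ for every } l = j, \dots, s \,\}.
\]
Since each $z_{(l)}$ with $l \geq j$ has at least one nonzero component on $V_j$, the sets $U_{t^*}$ cover $V_j$. For $z \in U_{t^*}$, writing $g = \pi_p(\zeta)$ with $\zeta_{(l)} = (\omega_l, \dots, \omega_l)$, the condition $g \cdot z \in S_{t^*}$ reduces to $\omega_l^{\Lambda(p)_{t_l^*}} z_{t_l^*} = 1$ for each $l = j, \dots, s$. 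This can be solved in $\C^*$ by taking a local holomorphic branch of the $\Lambda(p)_{t_l^*}$-th root; freeness of the $G_j$-action then implies that the resulting element $g(z) \in G_j$ is independent of the branches chosen, so $z \mapsto g(z)$ is a well-defined holomorphic map $U_{t^*} \to G_j$. This yields the desired local trivialization $U_{t^*} \simeq G_j \times S_{t^*}$.

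Transition functions between different choices of $t^*$ are ratios of monomial expressions in the coordinates, and hence are holomorphic $G_j$-valued maps. Therefore $G_j\backslash V_j$ inherits a unique complex manifold structure making the projection $V_j \to G_j\backslash V_j$ a holomorphic principal $G_j$-bundle; in particular, every $G_j$-orbit is a fiber and thus a complex submanifold of $V_j \subset \C^n$. The main subtle point to verify will be the well-definedness of the map $z \mapsto g(z)$: the parameters $\omega_l$ are only determined up to $\Lambda(p)_{t_l^*}$-th roots of unity, and one must use freeness (Lemma~\ref{lem:stratification}(4)) combined with the description of $G_j$ via $\pi_p$ to confirm that all these ambiguous choices collapse to a single element of $G_j$ depending holomorphically on $z$.
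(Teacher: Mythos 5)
Your route is genuinely different from the paper's: the paper builds no trivializations at all, but invokes the quotient theorem for free and proper actions (citing \cite{GHL}), so that the entire content of its proof is the properness estimate $C^{-1}\le|\zeta_t|^{\Lambda(p)_t}\le C$ derived from upper and lower bounds for $|z_t|$ on a compact subset of $V_j$. Your slice construction, however, has a gap that is fatal for general $p$, even though it does work for $p=(1,\dots,1)$. The equation $\omega_l^{\Lambda(p)_{t_l^*}}z_{t_l^*}=1$ determines $\omega_l$ only up to a $\Lambda(p)_{t_l^*}$-th root of unity $\epsilon_l$, while the group element $g=\pi_p(\zeta)$ acts on the \emph{whole} block $l$ through the exponents $\Lambda(p)_t=\lcm(p)/p_t$, which are in general not all equal within a block. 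Replacing $\omega_l$ by $\epsilon_l\omega_l$ therefore changes $g$ (its coordinate at $t$ is multiplied by $\epsilon_l^{\Lambda(p)_t}\neq 1$) and changes the point $g\cdot z$, which nevertheless still lies in $S_{t^*}$. Thus $S_{t^*}$ meets a single orbit in several points, $z\mapsto g(z)$ is genuinely multivalued, and freeness cannot collapse the branches: freeness gives uniqueness of $g$ with $g\cdot z=w$ for a \emph{fixed} target $w$, whereas your branches produce \emph{different} targets on the same orbit. Concretely, take $s=1$, $k=(2)$, $p=(1,2)$, so $\Lambda(p)=(2,1)$ and $G_1=\{(\omega^2,\omega):\omega\in\C^*\}$; with $t^*=1$ the orbit of $(z_1,z_2)$, $z_1z_2\neq 0$, meets $\{z_1=1\}$ in the two distinct points $(1,\pm\omega_0 z_2)$. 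Choosing the other coordinate repairs this example but not, say, a block with $p=(2,3)$, where $\Lambda(p)=(3,2)$ and neither coordinate yields a single-valued solution.

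A second, related defect is that a single holomorphic branch of the $\Lambda(p)_{t_l^*}$-th root cannot be chosen on all of $U_{t^*}$, which is not simply connected; so the asserted global identification $U_{t^*}\simeq G_j\times S_{t^*}$ and the ``monomial'' transition functions do not exist as written, quite apart from the branch ambiguity. To repair the argument you would need genuine local slices (small transversals meeting each nearby orbit exactly once, whose existence is precisely what the free-and-proper machinery provides), or you can follow the paper's shorter route: verify that for compact $K\subset V_j$ the set $\{g\in G_j: gK\cap K\neq\emptyset\}$ has compact closure, using that the coordinates $z_t$, $t>\widehat{k}_{j-1}$, are bounded away from $0$ and $\infty$ on $K$, and then quote the quotient manifold theorem.
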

\begin{proof}
	By Lemma~\ref{lem:local-action-domain} the action of $G_j$ on $V_j$
	is free and so by the results from \cite{GHL} it is enough to show
	that the action of $G_j$ is proper. Recall that a $G$-action on a
	manifold $V$ is proper if for every compact subset $K \subset V$ the
	set
	\[
		\{g \in G : gK \cap K \not= \emptyset \}
	\]
	has compact closure in $G$.
	
	Let $K \subset V_j$ be a compact subset. From the definition of $V_j$
	it follows we have
	\begin{itemize}
		\item if $z \in K$, then $z_{(t)} \not= 0$ for every $t = j,
			\dots, s$,
		\item there exists $c_1, c_2 > 0$ such that $c_1 \leq
			|z_t| \leq c_2$, for all $t = \widehat{k}_{(j-1)} + 1, \dots, n$,
			and for	every $z \in K$.
	\end{itemize}
	Let $\pi_p(\zeta) \in G_j$ be given, so that in particular we have
	$\zeta_t = 1$ for all $t = 1, \dots, \widehat{k}_{(j-1)}$. If
	$\pi_p(\zeta)K \cap K \not= \emptyset$, then there exists $z, z' \in
	K$ such that
	\[
		\zeta_t^{\Lambda(p)_t} z_t = z'_t
	\]
	for all $t = \widehat{k}_{(j-1)} +1 , \dots, n$. This implies that
	\[
		C^{-1} \leq |\zeta_t|^{\Lambda(p)_t} \leq C
	\]
	for every $t = \widehat{k}_{(j-1)} +1 , \dots, n$, if we take
	$C = c_2/c_1$. Since this defines a condition with compact closure
	in $G_j$, we conclude that the $G_j$-action on $V_j$ is compact.	
\end{proof}

The structure considered in the previous result is easily seen
to induced a corresponding one on the domain $\Omega_p^n$. Its proof
is an easy consequence of Lemma~\ref{lem:stratification} and
Theorem~\ref{thm:assoc-principal-bundles} since $\Omega_p^n$
is open in $\C^n$.

\begin{theorem}
	\label{thm:assoc-fibers}
	Let $k \in \Z_+^s$ be a partition of $n$ and $h \in \Z_+^s$
	be such that the conditions of
	Definition~\ref{def:commuting-symbols} are satisfied. Consider the
	submanifolds $V_1, \dots, V_s$ of $\C^n$ and the subgroups
	$G_1, \dots, G_s$ of $\mT_K^{\C}(p)$ defined above, and denote
	$\widehat{V}_j = V_j \cap \Omega_p^n$ for every $j = 1, \dots, s$.
	Then, the following properties are satisfied for every
	$j = 1, \dots, s$
	\begin{enumerate}
		\item The subset $\widehat{V}_j \cup \widehat{V}_s$
			is a closed complex	submanifold of $\Omega_p^n$.
		\item The submanifold $\widehat{V}_j$ is open in
			$\widehat{V}_j \cup \widehat{V}_s$.
		\item The intersection of the $G_j$-orbits in $V_j$
			with $\Omega_p^n$ defines a foliation $\F_j$
			by complex submanifolds in $\widehat{V}_j$.
	\end{enumerate}
\end{theorem}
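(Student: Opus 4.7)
The plan is to derive the three properties directly from Lemma \ref{lem:stratification} and Theorem \ref{thm:assoc-principal-bundles} by intersecting with the open set $\Omega_p^n \subset \C^n$ and invoking elementary facts about how submanifold and foliation structures behave under restriction to an open subset of the ambient space.

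For item (1), I would start from Lemma \ref{lem:stratification}(1), which gives that $V_j \cup \dots \cup V_s$ is a closed complex submanifold of $\C^n$. Intersecting with the open set $\Omega_p^n$ yields $\widehat{V}_j \cup \dots \cup \widehat{V}_s$, and the intersection of a closed complex submanifold of $\C^n$ with an open subset is always a closed complex submanifold of that open subset. For item (2), Lemma \ref{lem:stratification}(2) asserts that $V_j$ is open in $V_j \cup \dots \cup V_s$, and one simply observes that intersecting both sides with the open set $\Omega_p^n$ preserves this relative openness, giving that $\widehat{V}_j$ is open in $\widehat{V}_j \cup \dots \cup \widehat{V}_s$.

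For item (3), the key input is Theorem \ref{thm:assoc-principal-bundles}: the quotient map $V_j \to G_j \backslash V_j$ is a holomorphic principal $G_j$-bundle, so in particular the partition of $V_j$ into $G_j$-orbits defines a holomorphic foliation whose leaves are complex submanifolds. I would argue that this foliated structure restricts to $\widehat{V}_j = V_j \cap \Omega_p^n$ in the natural way: any local trivialization $\pi^{-1}(U) \simeq G_j \times U$ of the principal bundle, intersected with the open set $\Omega_p^n$, remains a local product of an open subset of $G_j$ with a transversal, which is exactly the local model for a foliation. The leaves of the resulting foliation $\F_j$ on $\widehat{V}_j$ are therefore the connected components of the intersections (orbit of $G_j$ in $V_j$)$\,\cap\,\Omega_p^n$, each of which is an open subset of a $G_j$-orbit and hence a complex submanifold of $\widehat{V}_j$.

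As the paper already indicates that the result is an easy consequence of the cited statements, I do not expect any serious obstacle. The only point deserving mild care is that the $G_j$-action is defined globally on $\C^n$, and that passing to the open set $\Omega_p^n$ preserves the local product structure of the principal bundle from Theorem \ref{thm:assoc-principal-bundles}; once this is noted, the foliation in item (3) is simply the restriction of the orbit foliation on $V_j$ to the open set $\widehat{V}_j$.
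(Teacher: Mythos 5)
Your proposal is correct and follows exactly the route the paper indicates: the paper offers no written-out proof, stating only that the result is an easy consequence of Lemma~\ref{lem:stratification} and Theorem~\ref{thm:assoc-principal-bundles} together with the openness of $\Omega_p^n$ in $\C^n$, and your argument supplies precisely those details (restriction of closed submanifolds, relative openness, and restriction of the $G_j$-orbit foliation to the open subset). No discrepancies to report.
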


We now introduce the notion of a Lagrangian frame that was first
considered in \cite{QV-Reinhardt}, and refer to this work for the
details of the geometric notions involved.

\begin{definition} \label{def:Lag-frame}
	For a K\"ahler manifold $N$, a Lagrangian frame is a pair of
	smooth foliations $(\F,\OO)$ that satisfy the following
	properties
	\begin{enumerate}
		\item Both foliations are Lagrangia. In other words, their
			leaves are Lagrangian submanifolds of $N$.
		\item If $L_1$ and $L_2$ are leaves of $\F$ and $\OO$, respectively,
			then $T_xL_1 \perp T_xL_2$ at every point $x \in L_1 \cap L_2$.
		\item The foliation $\OO$ is Riemannian. In other words, the
			the Riemannian metric of $N$ is invariant under the leaf
			holonomy of $\OO$.
		\item The foliation $\F$ is totally geodesic. In other words,
			the leaves of $\F$ are totally geodesic submanifolds of
			$N$.
	\end{enumerate}
	And in this case, we refer to $\OO$ and $\F$ as the Riemannian
	and totally geodesic foliations, respectively, of the Lagrangian
	frame.
\end{definition}

It is a remarkable fact that the symbols in $\mA_{k,h}(\Omega_p^n)$
yield a Lagrangian frame on $\Omega_p^n$, as the following result
establishes.

\begin{theorem}[Lagrangian frames associated to $\mA_{k,h}(\Omega_p^n)$]
	\label{thm:assoc-Lag-frame}
	Let $k \in \Z_+^s$ be a partition of $n$ and $h \in \Z_+^s$
	be such that the conditions of
	Definition~\ref{def:commuting-symbols} are satisfied. Then,
	for every $j =1, \dots, s$ and for every leaf $L$ in the foliation
	$\F_j$ of $\widehat{V}_j$ the following properties hold.
	\begin{enumerate}
		\item The action of $\mT_j$ restricted to $L$ defines a Riemannian
			foliation $\OO_L$ on whose leaves every symbol in
			$\mA_{k,h}(\Omega_p^n)$ is constant.
		\item The vector bundle $T\OO_L^\perp$ defined as the orthogonal
			complement of $T\OO_L$ in $TL$ is integrable to a totally
			geodesic foliation $\iO_L$.
		\item The pair $(\OO_L, \iO_L)$ is a Lagrangian frame in
			$L$ for the K\"ahler structure inherited from $\Omega_P^n$.
	\end{enumerate}
\end{theorem}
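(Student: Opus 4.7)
For a fixed leaf $L$ of $\F_j$, Theorem~\ref{thm:assoc-principal-bundles} realizes $L$ as a complex $G_j$-orbit of complex dimension $s+1-j$ inside $\Omega_p^n$, so $L$ inherits a K\"ahler structure from the Bergman metric. The plan is to build $(\OO_L, \iO_L)$ from the subgroup $\mT_j \subset G_j$ and the complementary directions obtained by applying the complex structure, then verify the four items of Definition~\ref{def:Lag-frame} in turn.

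I would first handle (1). Since $\mT_j$ is a closed Lie subgroup of $G_j$ preserving $L$, its orbits in $L$ define the foliation $\OO_L$. The inclusion $\mT_j \subset \mT_k(p)$ together with Theorem~\ref{thm:assoc-abelian-group} imply that every symbol in $\mA_{k,h}(\Omega_p^n)$ is constant along the leaves of $\OO_L$. Because $\mT_j$ is contained in $\T^n$, which acts isometrically on $\Omega_p^n$ for the Bergman metric, it acts on $L$ by K\"ahler isometries; the orbit foliation of a compact group of isometries is automatically Riemannian, yielding that $\OO_L$ is Riemannian.

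Next I would establish the Lagrangian property of $\OO_L$ together with (2). The key point is that $G_j \simeq \C^{*(s+1-j)}$ acts holomorphically and freely on $L$ and that $\mathfrak{g}_j = \mathfrak{t}_j \oplus J\mathfrak{t}_j$ with $\mathfrak{t}_j = \Lie(\mT_j)$. The infinitesimal action $X \mapsto X^*_z$ is then $\C$-linear and injective, so at every $z \in L$
\[
T_zL = T_z\OO_L \oplus J\,T_z\OO_L, \qquad T_z\OO_L \cap J\,T_z\OO_L = \{0\}.
\]
Hence $T_z\OO_L$ is totally real and of middle real dimension, so it is Lagrangian with respect to the K\"ahler form $\omega$, and the identity $g(u,Jv) = \omega(u,v)$ forces $T\OO_L^\perp = J\,T\OO_L$ in $TL$. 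Because $\mathfrak{g}_j$ is abelian, the fundamental vector fields arising from $J\mathfrak{t}_j$ commute, so $J\,T\OO_L$ is involutive; Frobenius then yields the foliation $\iO_L$, whose leaves are the orbits in $L$ of the closed abelian subgroup $A_j := \exp(J\mathfrak{t}_j) \subset G_j$.

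The main obstacle is (3), namely proving that the leaves of $\iO_L$ are totally geodesic. My approach is to exploit the Reinhardt structure of $\Omega_p^n$: the results of \cite{QV-Reinhardt} provide, for the maximal isometric $\T^n$-action on $\Omega_p^n$, a complementary totally geodesic foliation given by orbits of a ``positive-scaling'' abelian group arising from the complexification of $\T^n$. The leaves of $\iO_L$ are $A_j$-orbits in $L$, and these are precisely the intersections of such global totally geodesic leaves with the complex $G_j$-orbit $L$; since the global foliation is compatible with the $G_j$-action, restriction preserves the totally geodesic property. Failing that, one can verify the claim by a direct computation in $p$-polar coordinates, showing that the second fundamental form of an $A_j$-orbit with respect to the Bergman metric on $\Omega_p^n$ vanishes on $J\,T\OO_L$.
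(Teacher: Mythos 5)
Your treatment of part (1) and of the integrability of $T\OO_L^\perp$ matches the paper's proof: the paper likewise takes a basis $\{X_t\}$ of $\Lie(\mT_j)$, notes that the fundamental fields $X_t^*$ commute because $\mT_j$ is Abelian, identifies $JX_t^* = (iX_t)^*$ via the holomorphy of the local $\mT_k^\C(p)$-action, deduces $[JX^*_{t_1}, JX^*_{t_2}] = 0$, and applies Frobenius. Two of your steps, however, are genuinely problematic. First, the inference ``$T_z\OO_L$ is totally real and of middle real dimension, so it is Lagrangian'' is not a valid implication: a half-dimensional subspace $V$ with $V \cap JV = \{0\}$ need not satisfy $\omega|_V = 0$ (in $\C^2$ take $V = \mathrm{span}(e_1, \cos\theta\, e_2 + \sin\theta\, Je_1)$ with $\sin\theta \neq 0$). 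The paper instead invokes the specific fact from \cite{QV-Reinhardt} that the $\T^n$-orbits in a Reinhardt domain are Lagrangian for the Bergman metric; this gives isotropy of the $\mT_j$-orbits, and the dimension count then yields that $\OO_L$ is Lagrangian in $L$.

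Second, and more seriously, your argument for the total geodesy of $\iO_L$ does not go through. You propose that the $A_j$-orbits in $L$ are intersections with $L$ of leaves of a global totally geodesic foliation of $\Omega_p^n$, and that ``restriction preserves the totally geodesic property.'' It does not: if $S$ is totally geodesic in $\Omega_p^n$, the intersection $S \cap L$ need not be totally geodesic \emph{in} $L$ unless $L$ itself is totally geodesic in $\Omega_p^n$, which the $G_j$-orbits are not in general; the second fundamental form of $S \cap L$ inside $L$ acquires a contribution from the shape operator of $L$. The paper's actual mechanism is the standard lemma (cited from \cite{QV-Reinhardt}) that for a Riemannian foliation the orthogonal distribution, once integrable, integrates to a \emph{totally geodesic} foliation, because the bundle-like property forces geodesics issuing orthogonally to the leaves of $\OO_L$ to remain orthogonal. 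That is the missing key ingredient: combined with the Riemannian property of $\OO_L$ (which you do establish) and the Frobenius step (which you also carry out), total geodesy is immediate. Your fallback of a direct second-fundamental-form computation in $p$-polar coordinates is only announced, not performed, so as written items (2) and (3) of the theorem are not proved.
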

\begin{proof}
	Note that $L$ is an open set of a $G_j$-orbit in $V_j$. Hence, the complex
	dimension of $L$ is $\dim_\C G_j = s+1-j$. Hence, the real dimension of the
	$\mT_j$-orbits in $L$ is $\dim \mT_j = s+1-j$. Also, $\mT_j$ leaves invariant
	both $V_j$ and $\Omega_p^n$, and so it leaves invariant $L$. Since the
	$\mT_j$-action on $V_j$ is free, it is so on $L$ and hence it defines a foliation
	$\OO_L$ in $L$.
	
	Since $\mT_j \subset \T^n$ which acts by isometries on the Reinhardt domain
	$\Omega_p^n$, it follows that the foliation $\OO_L$ is Riemannian in $L$
	for the K\"ahler structure on $L$ inherited from $\Omega_p^n$. We refer
	to \cite{QV-Reinhardt} for the relevant results. We also know from \cite{QV-Reinhardt}
	that the $\T^n$-orbits on $\Omega_p^n$ are Lagrangian. Since $\mT_j$ acts on $L$ by
	a restriction of the $\T^n$-action, it follows that the $\mT_j$-orbits are isotropic
	in $L$; here we have also used that $L$ has the K\"ahler structure inherited from
	that of $\Omega_p^n$. Since the real dimension of the $\mT_j$-orbits in $L$ and
	the complex dimension of $L$ are the same, we conclude that $\OO_L$ is a
	Lagrangian foliation. Also observe that, by Theorem~\ref{thm:assoc-abelian-group},
	the symbols in $\mA_{k,h}(\Omega_p^n)$ are constant on the leaves of the
	foliation $\OO_L$.
	
	Since the orthogonal complement to the tangent bundle of a Riemannian
	foliation is totally geodesic (see \cite{QV-Reinhardt}), to complete the
	proof it is enough to show that $T\OO^\perp_L = i T\OO_L$ is an integrable
	vector subbundle of $TL$.
	
	To prove the integrability of $iT\OO_F$ let us consider $\{ X_t : t
	= 1, \dots, s+1-j \}$ a base for the Lie algebra of $\mT_j$. The
	$\mT_j$-action on $L$ induces a family of vector fields $\{ X_t^* :
	t = 1, \dots, s+1-j \}$ on $L$ characterized as those having flows
	given by $\{ \exp(X_t) : t = 1, \dots, s+1-j \}$, respectively. We
	refer to \cite{Helgason} for further details on this
	construction. Since the $\mT_j$-action on $L$ is free, it follows
	that the vector fields $\{ X_t^* : t = 1, \dots, s+1-j \}$ are
	linearly independent at every point of $L$, thus defining a
	generating set for $T\OO_L$ at every point of $L$. Furthermore,
	since $\mT_j$ is Abelian $[X^*_{t_1}, X^*_{t_2}] = 0$ for every
	$t_1, t_2$.
	
	On the other hand, for $J$ the complex structure of $\Omega_p^n$,
	the set of vector fields $\{ JX_t^* : t = 1, \dots, s+1-j \}$
	yields a generating set	for $iT\OO_F$ at every point of $F$.
	
	We claim that for every $t$, the vector fields $X^*_t$ and $JX^*_t$
    are holomorphic, i.e.~they integrate to holomorphic local flows.
	First note that, dy definition, the vector fields $X^*_t$
	to flows which are $1$-parameter subgroups of the $\mT_j$-action.
	Since the latter is holomorphic, we	conclude that the vector fields $X^*_t$
	are holomorphic.
	
    We now consider the vector fields $JX^*_t$. First we recall that
	$\mT_j \subset \mT_k(p) \subset \mT_k^\C(p)$, which implies that
	\[
		X_t \in \Lie(\mT_j) \subset \Lie(\mT_k^\C(p))
	\]
	Since the local action of $\mT_k^\C(p)$ on $\Omega_p^n$ is holomorphic
	we conclude that
	\[
		JX_t^* = (iX_t)^*
	\]
	In particular, $JX_t^*$ is a holomorphic vector field on $\Omega_p^n$.
	Once this is known, we have for every $t_1, t_2$
	\[
		[JX^*_{t_1}, JX^*_{t_2}] = J[X^*_{t_1}, JX^*_{t_2}]
		= J^2[X^*_{t_1}, X^*_{t_2}] = 0.
	\]
	Here we have used in the first and second identities the fact that
	$JX^*_{t_2}$ and $X^*_{t_1}$, respectively, define Lie derivatives that
	commute with $J$; the latter is a consequence of the fact that both
	vector fields are holomorphic (see \cite{KNII}). Thus, we have
	proved that the bundle $iT\OO_p$ has a set of sections that generate
	the fibers and commute pairwise. Hence, the integrability of
	$iT\OO_p$ follows from Frobenius Theorem.
\end{proof}

Finally, we mention that a result similar to Theorem~5.9 from \cite{QS-Quasi-PC}
can be obtained by considering a suitable complement of $\mT_k(p)$ in$\mT_k^\C(p)$.
However, this provides only a local action on $\Omega_p^n$ and so we omit the
details of such construction.

\end{document}